
\documentclass[12pt]{amsart}
\usepackage{amsmath,amssymb,amsthm,color, euscript, enumerate,comment}
\usepackage{dsfont}
\usepackage{longtable}

\vfuzz2pt 
\hfuzz2pt 
\setlength{\topmargin}{0cm} 
\setlength{\oddsidemargin}{0cm}
\setlength{\evensidemargin}{0cm} 
\setlength{\textheight}{22cm}
\setlength{\textwidth}{16cm}




\renewcommand{\l}{\left}
\renewcommand{\r}{\right}



\newcommand{\maru}[1]{{\ooalign{\hfil#1\/\hfil\crcr
\raise.167ex\hbox{\mathhexbox20D}}}}

\newcommand{\ruby}[2]{%
 \leavevmode
 \setbox0=\hbox{#1}%
 \setbox1=\hbox{\tiny #2}%
 \ifdim\wd0>\wd1 \dimen0=\wd0 \end{lemma}se \dimen0=\wd1 \fi
 \hbox{%
   \kanjiskip=0pt plus 2fil
   \xkanjiskip=0pt plus 2fil
   \vbox{%
     \hbox to \dimen0{%
       \tiny \hfil#2\hfil}%
     \nointerlineskip
     \hbox to \dimen0{\mathstrut\hfil#1\hfil}}}}

\newcommand{\la}{\langle}
\newcommand{\ra}{\rangle}



\newcommand{\Z}{\mathbb{Z}}
\newcommand{\C}{\mathbb{C}}

\newcommand{\Q}{\mathbb{Q}}




\newcommand{\Sym}{{\rm Sym}}


\newcommand{\aut}{\mathrm{Aut}\,}
\newcommand{\Aut}{\mathrm{Aut}\,}

\renewcommand{\hom}{\mathrm{Hom}}


\newcommand{\be}{\beta}
\newcommand{\al}{\alpha}

\newcommand{\Span}{\mathrm{Span}}
\newcommand{\Stab}{\mathrm{Stab}}



\makeatletter \@addtoreset{equation}{section}

\theoremstyle{plain}
\newtheorem{theorem}{Theorem}[section]
\newtheorem{proposition}[theorem]{Proposition}
\newtheorem{lemma}[theorem]{Lemma}
\newtheorem{corollary}[theorem]{Corollary}

\theoremstyle{definition}
\newtheorem{definition}[theorem]{Definition}

\theoremstyle{remark}
\newtheorem{remark}[theorem]{Remark}
\numberwithin{equation}{section}

\title[Automorphism groups of cyclic orbifolds]{Completely fixed point free isometry and cyclic orbifold of lattice vertex operator algebras}
 \subjclass[2010]{Primary  17B69}

\author{Hsian-Yang Chen}
  \address[H.Y.  Chen]{ National University of Tainan, Tainan  70005, Taiwan}
\email{hychen@mail.nutn.edu.tw} 
 \thanks{H.Y. Chen is supported by   NSTC grant  111-2115-M-024-001-MY2  of Taiwan}
\author{Ching Hung Lam} %
\address[C. H. Lam] {Institute of Mathematics, Academia Sinica, Taipei 10617, Taiwan} 
\email{chlam@math.sinica.edu.tw} 
\date{}
\thanks{C.H. Lam is supported by   NSTC grant  110-2115-M-001-003-MY3   of Taiwan}

\newcommand{\sfr}[2]{\leavevmode\kern-.1em
  \raise.5ex\hbox{\the\scriptfont0 #1}\kern-.1em
  /\kern-.15em\lower.25ex\hbox{\the\scriptfont0 #2}}

\pagestyle{plain}
\begin{document}

\begin{abstract}
We continue our study of  cyclic orbifolds of lattice vertex operator algebras and their full automorphism groups. We consider  some special isometry $g\in O(L)$ such that  $g^i$ is fixed point free on $L$ for any $1\leq i\leq |g|-1$. 
We show that when $L_2=\emptyset$ and $g^i$ is fixed point free on $L$ for any $1\leq i\leq |g|-1$, $V_L^{\hat{g}}$ has extra automorphisms implies either (1) the order of $g$ is   a prime or  (2) $L$ is isometric  to the Leech lattice or some coinvariant sublattices of the Leech lattice. 
\end{abstract}

\maketitle


\section{Introduction}
We continue our study of  cyclic orbifolds of lattice vertex operator algebras and their full automorphism groups.
Let $V$ be a vertex operator algebra (abbreviated  as VOA) and let $g$ be  an automorphism of $V$ of finite order $n$. The fixed-point subspace $$V^g = \{ v \in V\mid  g v = v \}$$ is 
a subVOA and is often called an orbifold subVOA.    
When $V=V_L$ is a lattice VOA and $g$ is a lift of the $(-1)$ isometry of $L$, the full automorphism group of $V_L^g =V_L^+$ has been determined in \cite{Sh04,Sh06}.  The case when $|g|=p$ is a prime and $L$ is rootless has also been studied in \cite{LS21}. It is shown that a cyclic orbifold $V_L^{\hat{g}}$ contains extra automorphisms if and only if the rootless even lattice $L$ can be constructed by Construction B from a code over $\Z_p$ or is isometric to the coinvariant lattice of the Leech lattice associated with some isometry of order $p$. The key method  is to study the orbit
of the irreducible module $V_{L}(1) =\{v\in V_L\mid gv=e^{2\pi i/p} v\}$ under the conjugate actions of $\Aut(V_{L}^{\hat{g}})$. Using a similar method, certain orbifold vertex operator algebras  associated with some coinvariant lattices of the Leech lattice are studied in \cite{BLS,Lam1,Lam2}. In addition,  a certain criterion for the existence of extra automorphisms was also discussed in \cite{Lam1,LS21}. In this article, we consider another special case. We assume that  $g^i$ is fixed point free on $L$ for any $1\leq i\leq |g|-1$. We call such a $g\in O(L)$  a completely fixed point free isometry of $L$. 
As our main result, we show that when $L_2=\emptyset$ and $g\in O(L)$ is completely fixed point free, $V_L^{\hat{g}}$ has extra automorphisms implies that  either (1) the order of $g$ is   a prime or  (2) $L$ is isometric  to the Leech lattice or some coinvariant sublattices of the Leech lattice. We should note that a special case when $|g|=4$ is also studied in \cite{CL21}.

\section{Preliminaries}

\subsection{Lattices}\label{S:lattice}

Let $L$ be an even lattice with the positive-definite bilinear form $(\cdot | \cdot )$. We 
denote its isometry group by $O(L)$. 
The discriminant group $\mathcal{D}(L)$ is defined to 
be the quotient group $L^*/L$, where  $L^*=\{ \alpha\in \Q\otimes_\Z L\mid ( \alpha| L)\subset \Z \}$  is the dual lattice of $L$. 

Let $g\in O(L)$ of order $n$.
The fixed-point sublattice $L^g$ of $g$ and the \emph{coinvariant lattice} $L_g$ of $g$ are defined to be 
\begin{equation}
	L^g=\{\alpha\in L\mid g\alpha=\alpha\}\quad \text{ and } \quad L_g = \{ \alpha\in L \mid (\alpha| \beta) =0 \text{ for all } \beta\in L^g\}.\label{Eq:coinv}
\end{equation}
Clearly,  $$\mathrm{rank}(L)=\mathrm{rank}( L^g)+\mathrm{rank} (L_g).$$
Notice that the restriction of $g$ to $L_g$ is a fixed-point free isometry of order $n$. 

\subsection{Lattice VOAs and their automorphism groups}
We recall the structure of the automorphism group of a lattice VOA $V_L$.   
Let $L$ be an even lattice with the (positive-definite) bilinear form $( \ | \ )$.  
Consider a central extension 
$\hat{L}=\{\pm e^\al\mid \al\in L\}$ 
of $L$ by $\pm 1$
such that $e^{\al} e^\be = (-1)^{(\al |\be)} e^{\be}e^\al$.

Let $O(\hat{L})=\{g\in\aut(\hat{L})\mid \bar{g}\in O(L)\}$.
For $g\in O(\hat{L})$, let $\bar{g}$ be the map $L\to L$ defined by $g(e^\alpha)\in\{\pm e^{\bar{g}(\alpha)}\}$.
By \cite[Proposition 5.4.1]{FLM},  we have an exact sequence
\[
  1 \to \hom (L,\Z/2\Z) \to  O(\hat{L})\to  O(L) \to  1.
\]
Let
$$
  N(V_L) = \l\la \exp(a_{(0)}) \mid a\in (V_L)_1 \r\ra
$$
be the normal subgroup of $\Aut(V_L)$ generated by the inner automorphisms $\exp(a_{(0)})$.
Recall that $O(\hat{L})$ can be viewed a subgroup of $\aut(V_L)$ (cf.~loc.~cit.).


\begin{theorem}[\cite{DN}]\label{aut}
Let $L$ be a positive definite even lattice.
Then
\[
  \aut (V_L) = N(V_L)\,O(\hat{L})
\]
Moreover, the intersection $N(V_L)\cap O(\hat{L})$ contains a subgroup
$\hom (L,\Z/2\Z)$ and the quotient $\aut (V_L)/N(V_L)$ is isomorphic
to a quotient group of $O(L)$.
\end{theorem}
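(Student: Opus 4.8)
The plan is to analyze an arbitrary $\sigma\in\aut(V_L)$ through its action on the weight-one space. Since any VOA automorphism fixes the vacuum and the conformal vector, $\sigma$ preserves each homogeneous piece $(V_L)_n$; in particular it restricts to an automorphism of the reductive Lie algebra $(V_L)_1=\mathfrak{h}\oplus\bigoplus_{\alpha\in L_2}\C e^\alpha$ (bracket $[a,b]=a_{(0)}b$) and preserves the invariant form given by $a_{(1)}b=\langle a,b\rangle\vac$, where $\mathfrak{h}=\{h(-1)\vac\}\cong\C\otimes_\Z L$ is a Cartan subalgebra and $L_2$ is the set of roots. First I would record that $N(V_L)$ is normal in $\aut(V_L)$: conjugating $\exp(a_{(0)})$ by $\sigma$ yields $\exp((\sigma a)_{(0)})$ with $\sigma a\in(V_L)_1$, again an inner automorphism. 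Because $\sigma$ sends the Cartan subalgebra $\mathfrak{h}$ to another Cartan subalgebra of $(V_L)_1$, and all Cartan subalgebras of a complex reductive Lie algebra are conjugate under its inner automorphism group --- realized here by the exponentials $\exp((e^\alpha)_{(0)})$, $\alpha\in L_2$, which lie in $N(V_L)$ --- there is $\tau\in N(V_L)$ with $(\tau\sigma)(\mathfrak{h})=\mathfrak{h}$. Replacing $\sigma$ by $\sigma_1:=\tau\sigma$, I may thus assume $\sigma$ stabilizes $\mathfrak{h}$.

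Second, I would recover the full isometry group from the Heisenberg weight decomposition. The commuting operators $\{h_{(0)}\mid h\in\mathfrak{h}\}$ decompose $V_L=\bigoplus_{\beta\in L}V_L^\beta$ into simultaneous eigenspaces, where $h_{(0)}$ acts on $V_L^\beta$ by $(h|\beta)$; thus the set of $\mathfrak{h}$-weights is exactly $L$. Since $\sigma_1$ stabilizes $\mathfrak{h}$ and preserves the form, a short computation shows $\sigma_1(V_L^\beta)=V_L^{\sigma_1\beta}$, so the induced linear map permutes the weights and hence restricts to an isometry $\bar\sigma_1\in O(L)$. Choosing a lift $\tilde\sigma_1\in O(\hat{L})$ of $\bar\sigma_1$ (possible by the exact sequence from \cite{FLM}), the automorphism $\sigma_2:=\tilde\sigma_1^{-1}\sigma_1$ fixes $\mathfrak{h}$ pointwise, hence preserves every $V_L^\beta$ and scales the lowest-weight vector $e^\beta$ by some $\lambda_\beta\in\C^\times$. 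Matching the coefficient of $e^{\alpha+\beta}$ in $Y(e^\alpha,z)e^\beta$ forces $\lambda_{\alpha+\beta}=\lambda_\alpha\lambda_\beta$, so $\beta\mapsto\lambda_\beta$ is a character of $L$. Every character of $L$ has the form $\beta\mapsto e^{2\pi\sqrt{-1}(h|\beta)}$ for some $h\in\mathfrak{h}$, so $\sigma_2=\exp(2\pi\sqrt{-1}\,h_{(0)})\in N(V_L)$. Therefore $\sigma=\tau^{-1}\tilde\sigma_1\sigma_2\in N(V_L)\,O(\hat{L})\,N(V_L)=N(V_L)\,O(\hat{L})$, the last equality by normality of $N(V_L)$; this yields $\aut(V_L)=N(V_L)\,O(\hat{L})$.

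For the remaining assertions I would check $\hom(L,\Z/2\Z)\subseteq N(V_L)\cap O(\hat{L})$. It sits in $O(\hat{L})$ as the kernel of $O(\hat{L})\to O(L)$; it lies in $N(V_L)$ because the pairing induces an isomorphism $L^*/2L^*\simto\hom(L,\Z/2\Z)$, so each $\chi$ equals $\exp(\pi\sqrt{-1}\,h_{(0)})$ for a suitable $h\in L^*$. Finally, normality of $N(V_L)$ gives $\aut(V_L)/N(V_L)\cong O(\hat{L})/(O(\hat{L})\cap N(V_L))$, which is a quotient of $O(\hat{L})/\hom(L,\Z/2\Z)\cong O(L)$. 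I expect the main obstacle to be the first step: verifying that the root-vector exponentials $\exp((e^\alpha)_{(0)})$ are well-defined automorphisms of the whole VOA (local nilpotency of $(e^\alpha)_{(0)}$) and genuinely effect the inner-automorphism conjugacy needed to normalize $\mathfrak{h}$, so that the Cartan-conjugacy argument takes place inside $N(V_L)$ rather than merely inside $\aut((V_L)_1)$. When $L_2=\emptyset$ this step is vacuous and $(V_L)_1=\mathfrak{h}$ is already its own Cartan.
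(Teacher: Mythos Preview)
The paper does not supply a proof of this theorem at all: it is stated with a citation to \cite{DN} (Dong--Nagatomo) and used as a black box, so there is no ``paper's own proof'' to compare against. Your sketch is essentially the standard Dong--Nagatomo argument, and the overall strategy is correct.

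Regarding the technical point you flag at the end: it is not a genuine obstruction. For a root $\alpha\in L_2$ the operator $(e^\alpha)_{(0)}$ preserves each finite-dimensional homogeneous piece $(V_L)_n$ and shifts the $\mathfrak{h}$-weight by $\alpha$, so on $(V_L)_n$ it is nilpotent; hence $(e^\alpha)_{(0)}$ is locally nilpotent on $V_L$ and $\exp((e^\alpha)_{(0)})$ is a well-defined element of $N(V_L)$. Since the restriction of $a_{(0)}$ to $(V_L)_1$ is $\ad(a)$, the image of $N(V_L)$ in $\Aut((V_L)_1)$ contains the inner automorphism group of the reductive Lie algebra $(V_L)_1$, and Cartan conjugacy can indeed be carried out inside $N(V_L)$. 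One small caution: in the second step, after choosing the lift $\tilde\sigma_1\in O(\hat{L})$, you should also note that $\tilde\sigma_1$ acts on $\mathfrak{h}$ via $\bar\sigma_1$, so that $\sigma_2=\tilde\sigma_1^{-1}\sigma_1$ really fixes $\mathfrak{h}$ pointwise (not merely setwise); this is implicit in the construction of the $O(\hat{L})$-action on $V_L$ but worth stating.
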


\begin{remark}\label{L2=0}
 When $L(2)=\emptyset$,  $(V_L)_1=\Span\{ \alpha{(-1)} \cdot 1 \mid \alpha\in L\}$ and   the normal subgroup
  $N(V_L)= \{\exp(\lambda \alpha{(0)}) \mid \alpha\in L,~\lambda \in \C\}$ is abelian. In this case, $N(V_L) \cap O(\hat{L})= \hom (L,\Z/2\Z)$ and $\aut (V_L)/N(V_L) \cong O(L)$. In particular, we have an exact sequence
  \begin{equation}\label{eq:5.3}
    1\to N(V_L) \to \aut(V_L) \stackrel{~\varphi~}\to O(L)\to 1.
  \end{equation}
\end{remark}

The following theorem can be proved by the same argument as in \cite[Theorem 
5.15]{LY2} (See also \cite[Theorem 2.7]{LY}).
 
\begin{theorem}
 \label{normalizer}
Let $L$ be a positive-definite rootless even lattice.
Let $g$ be a fixed-point free isometry of $L$ of finite order and
$\hat{g}$ a lift of $g$ in $O(\hat{L})$.
Then we have the following exact sequences:
\[
\begin{split}
 1\longrightarrow \hom(L/(1-g)L, \C^*)
  \longrightarrow N_{\aut(V_L)}(\langle \hat{g}\rangle)
  \stackrel{\varphi}\longrightarrow N_{O(L)}(\langle g\rangle) \longrightarrow 1;\\
   1\longrightarrow \hom(L/(1-g)L, \C^*)
  \longrightarrow C_{\aut(V_L)}(\hat{g})
  \stackrel{\varphi}\longrightarrow C_{O(L)}(g) \longrightarrow 1. 
\end{split}
\]
\end{theorem}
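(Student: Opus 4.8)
The plan is to deduce both sequences from the short exact sequence of Remark \ref{L2=0}, which applies because $L$ is rootless: write $\varphi\colon\aut(V_L)\to O(L)$ for the quotient map with kernel $N(V_L)$, and recall that $N(V_L)=\{\exp(h_{(0)})\mid h\in\C\otimes_\Z L\}$ is abelian and may be identified with the character group $\hom(L,\C^*)$, the element $\exp(h_{(0)})$ corresponding to $\chi_h\colon\beta\mapsto e^{(h|\beta)}$ (it acts on $e^\beta$ by this scalar). Since $\hat g$ is a lift of $g$ we have $\varphi(\hat g)=g$, so $\varphi$ carries $\la\hat g\ra$ onto $\la g\ra$; consequently $\varphi$ maps $N_{\aut(V_L)}(\la\hat g\ra)$ into $N_{O(L)}(\la g\ra)$ and $C_{\aut(V_L)}(\hat g)$ into $C_{O(L)}(g)$. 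Both asserted sequences then reduce to computing the kernel and the image of these restricted maps, and the two cases run in parallel, so I would treat the centralizer sequence and indicate the changes for the normalizer.

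For the kernel I would compute the conjugation action of $\hat g$ on a character $t=\chi_h\in N(V_L)$. A direct calculation on the generators $e^\beta$ gives $\hat g^{-1}\,t\,\hat g=t\circ g$, the character $\beta\mapsto t(g\beta)$; hence $t$ centralizes $\hat g$ exactly when $t$ is $g$-invariant, i.e. trivial on $(1-g)L$. This identifies $N(V_L)\cap C_{\aut(V_L)}(\hat g)$ with $\hom(L/(1-g)L,\C^*)$, the kernel claimed in the second sequence. For the normalizer one instead needs $t\,\hat g\,t^{-1}\in\la\hat g\ra$; writing $t\,\hat g\,t^{-1}=\hat g\,(t\circ g)t^{-1}$ and noting $(t\circ g)t^{-1}\in N(V_L)$, one sees this lies in $\la\hat g\ra$ iff $(t\circ g)t^{-1}\in\la\hat g\ra\cap N(V_L)=\la\hat g^{|g|}\ra$, with $\hat g^{|g|}\in\hom(L,\Z/2\Z)$. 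Taking the lift $\hat g$ so that $\hat g^{|g|}=1$ (so this intersection is trivial), the normalizer kernel again reduces to the $g$-invariant characters $\hom(L/(1-g)L,\C^*)$. Keeping track of this order-of-lift (sign) issue is the first technical point to handle carefully.

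The substantive part is surjectivity. Given $k\in C_{O(L)}(g)$ (resp. $k\in N_{O(L)}(\la g\ra)$ with $kgk^{-1}=g^a$), I would first lift $k$ to some $\hat k\in O(\hat L)\subset\aut(V_L)$ using the surjection $O(\hat L)\to O(L)$ of \cite{FLM}. Then $\hat k\,\hat g\,\hat k^{-1}$ and $\hat g^a$ are two lifts of the same isometry $g^a$, so they differ by a sign character $s_0\in\hom(L,\Z/2\Z)$. The idea is to correct $\hat k$ by a character $\mu\in N(V_L)$: for $\tilde k=\mu\hat k$ one computes $\tilde k\,\hat g\,\tilde k^{-1}=\hat g^a\,(\mu\circ g^a)\mu^{-1}s_0$, so $\tilde k$ normalizes $\la\hat g\ra$ (indeed $\tilde k\,\hat g\,\tilde k^{-1}=\hat g^a$) as soon as $\mu$ solves $(\mu\circ g^a)\mu^{-1}=s_0$, that is $\mu\big((g^a-1)\beta\big)=s_0(\beta)$ for all $\beta\in L$. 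This is exactly where the hypothesis enters: since $g$ is fixed-point free, so is $g^a$, hence $L^{g^a}=0$ and $1-g^a$ is injective on $L$; thus $s_0$ may first be transported to a well-defined character on $(1-g^a)L$ and then extended to all of $L$ because $\C^*$ is a divisible (injective) abelian group. The resulting $\tilde k$ lies in $C_{\aut(V_L)}(\hat g)$ (resp. $N_{\aut(V_L)}(\la\hat g\ra)$) and satisfies $\varphi(\tilde k)=k$, giving surjectivity.

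I expect the main obstacle to be bookkeeping rather than conceptual: controlling the $\pm1$ ambiguities inherent in $O(\hat L)$ and in the choice of lift $\hat g$, so that the kernel comes out to be precisely $\hom(L/(1-g)L,\C^*)$ and the corrected lift $\tilde k$ has the exact conjugation behaviour claimed. Fixed-point freeness of $g$ is used in two essential places — the injectivity (indeed invertibility over $\Q$) of $1-g$, which simultaneously makes $L/(1-g)L$ finite and makes the character equation solvable — while rootlessness is what guarantees, via Remark \ref{L2=0}, the clean exact sequence with abelian $N(V_L)$ that underlies the whole argument.
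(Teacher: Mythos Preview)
Your argument is correct and is precisely the standard one; the paper does not actually supply its own proof but states that the theorem ``can be proved by the same argument as in \cite[Theorem 5.15]{LY2} (see also \cite[Theorem 2.7]{LY}),'' and what you wrote is that argument. The only caveat is the one you already flagged: for the normalizer sequence you implicitly replace the arbitrary lift $\hat g$ by one with $|\hat g|=|g|$, which is harmless since such a lift always exists for a fixed-point free isometry, but strictly speaking the exact sequence as stated depends on this normalization of $\hat g$.
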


Let $h\in N_{\aut(V_L)}(\langle \hat{g}\rangle )$. Then it is clear that $hx \in V_L^{\hat{g}}$ for any $x\in 
V_L^{\hat{g}}$. Therefore, $N_{\aut(V_L)}(\langle \hat{g}\rangle)$ acts on $V_L^{\hat{g}}$ and there is a group 
homomorphism $$f: N_{\aut(V_L)}(\langle \hat{g}\rangle)/ \langle \hat{g}\rangle \longrightarrow 
\aut(V_L^{\hat{g}}).$$ The key question is to determine the image of $f$ and if $f$ is surjective. In 
particular, one would like to determine if there exist automorphisms in $\aut(V_L^{\hat{g}})$ 
which are not induced from $N_{\aut(V_L)}(\langle \hat{g}\rangle)$. We call such an 
automorphism an \emph{extra automorphism}.

\section{Irreducible modules for cyclic orbifolds of lattice VOAs}

Next we review a few facts about the irreducible modules for the orbifold VOA $V_L^{\hat{g}}$.  Recall from \cite{Mi,CM} that the orbifold VOA $V_{L}^{\hat{g}}$ is  $C_2$-cofinite and rational. Moreover, any irreducible $V_{L}^{\hat{g}}$-module is a submodule of an irreducible $\hat{g}^i$-twisted $V_{L}$-module for some $0\le i\le |\hat{g}|-1$  \cite{DRX}.

\begin{definition}\label{Mconj}
	Let $V$ be a VOA and $\tau\in\Aut(V)$. Let $g\in\Aut(V)$ and let $M=(M,Y_M)$ be a $g$-twisted $V$-module.
	The \emph{$\tau$-conjugate} $(M\circ \tau, Y_{M\circ \tau} (\cdot, z))$ of $M$ is defined as follows:
	\begin{equation}
		\begin{split}
			& M\circ \tau =M \quad \text{ as a vector space;}\\
			& Y_{M\circ \tau} (a, z) = Y_M(\tau a, z)\quad \text{ for any } a\in V.
		\end{split}\label{Eq:conjact}
	\end{equation}
	Then $(M\circ \tau, Y_{M\circ \tau} (\cdot, z))$ defines a $\tau g\tau^{-1}$-twisted  $V$-module.
\end{definition}

An irreducible (untwisted or twisted) module $M$ of $V_L$ is said to be $\hat{g}$-stable if 
$M\circ \hat{g}\cong M$. In this case, $\hat{g}$ acts on $M$ and we denote  the eigenspaces of $\hat{g} $ on $M$ by 
\[
M(j)= \{ x\in M\mid  \hat{g} x= e^{2\pi \sqrt{-1}j / n} x\}, \quad  0 \leq j\leq n-1,\ n=|\hat{g}|.    
\]

If $V_{\lambda+L}$ is $\hat{g}$-stable, or equivalently, $(1-g)\lambda\in L$, then $V_{\lambda+L}(j)$ is a simple current module of $V_L^{\hat{g}}$ (see for example \cite{Lam19}).  It is also known that the number of inequivalent  irreducible $\hat{g}^i$-twisted modules is equal to the number of inequivalent irreducible $\hat{g}^i$-stable modules of $V_L$ and all irreducible $\hat{g}^i$-twisted modules are $\hat{g}$-stable \cite{DLM2}. 
 
Let $P_0^{g^i}:L^* \to \Q\otimes_\Z L^{g^i}$ be the orthogonal projection and use $(L^*/L)^{g^i}$ to denote the set of cosets of $L$ in $L^*$ fixed by $g^i$.  
Then  $V_L$ has exactly $|(L^*/L)^{g^i}|$ irreducible $\hat{g}^i$-twisted $V_L$-modules, up to isomorphism. The irreducible $\hat{g}^i$-twisted $V_L$-modules have been constructed in \cite{Le,DL} explicitly and are classified in \cite{BK04}.
They are given by 
\begin{equation}
V_{\lambda+L}[\hat{g}^i]= M(1)[g^i]\otimes\C[P_0^{g^i}(\lambda+L)]\otimes T_{\tilde{\lambda}}, 
\qquad \text{ for } \lambda+L\in (L^*/L)^{g^i},   \label{twmodule0}
\end{equation}
where $M(1)[g^i]$ is the ``$g^i$-twisted" free bosonic space, $\C[\lambda+P_0^{g^i}(L)]$ is a module for the group algebra of $P_0^{g^i}(L)$ and $T_{\tilde{\lambda}}$ is an irreducible module for a certain ``$g^i$-twisted" central extension of $L_{g^i}$ associated with $\lambda$ (see \cite[Propositions 6.1 and 6.2]{Le} and \cite[Remark 4.2]{DL}).

\section{Orbifold VOAs having Extra automorphisms}
From now on, we study the orbifold lattice VOA $V_L^{\hat{g}}$ associated with an isometry $g\in O(L)$.  One of the main purposes is to try to classify all pairs $(L,g)$ such that $V_L^{\hat{g}}$ has extra automorphisms. In this article, we consider a special case such that $g^i$ is fixed point free for any $1\leq i\leq |g|-1$ on $L$.

\subsection{Completely fixed point free isometries}

\begin{definition}
	An isometry $g\in O(L)$ is said to be completely fixed point free if  $g^i$ is also fixed point free on $L$ for any $1\leq i\leq |g|-1$.
\end{definition}

If $g\in O(L)$ is completely fixed point free, then only the primitive $n$-th roots of unity will appear as the eigenvalues of $g$ and the minimal polynomial of $g$ on $L$ is given by the $n$-th cyclotomic polynomial $\Phi_n(x)$, where 
$n=|g|$. In this case, we have 
\begin{equation}
	\dim\mathfrak{h}_{(i)}=
	\begin{cases}
		\mathrm{rank}(L)/\varphi(n), &  \text{ if } (n,i)=1,\\
		0, & \text{ otherwise,} 
	\end{cases}
\end{equation}
where $\mathfrak{h}= \C\otimes_\Z L$, $\mathfrak{h}_{(j)}= \mathfrak{h}_{(j; g)}=\{v\in \mathfrak{h}\mid gv= e^{2\pi i\frac{j}{n} } v\}$ and 
$\varphi$ is the Euler totient function, i.e., $\varphi(n)$ is the number of positive integers $1\leq i\leq n-1$ which are relatively prime to $n$.

The following facts are well-known and will be used in the next section for classifying the possible pairs for $L$ and $g$.
 
\begin{lemma}\label{phi(1)}
	Let $$\Phi_n(x)=\prod_{1 \leq k \leq n,\atop (n,k)=1} (x-e^{2\pi i k/n})$$ be the $n$-th cyclotomic polynomial.  Then 
	\[
	\Phi_n(1) =
	\begin{cases}
		1 & \text{if $n$ is not a prime power},\\
		p & \text{if $n=p^k$ is a prime power}.
	\end{cases}
	\]
\end{lemma}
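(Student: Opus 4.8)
The plan is to start from the factorization $x^{n}-1=\prod_{d\mid n}\Phi_{d}(x)$ into cyclotomic polynomials. Dividing both sides by $\Phi_{1}(x)=x-1$ gives $\sum_{j=0}^{n-1}x^{j}=\prod_{d\mid n,\ d\geq 2}\Phi_{d}(x)$, and evaluating at $x=1$ will yield the key numerical identity
\[
\prod_{d\mid n,\ d\geq 2}\Phi_{d}(1)=n\qquad (n\geq 2).
\]
I would also note that for $d\geq 2$ the primitive $d$-th roots of unity occur in complex-conjugate pairs (with the single exception of $-1$ when $d=2$), so each factor $(1-\zeta)(1-\overline{\zeta})=\lvert 1-\zeta\rvert^{2}$ is positive and hence every $\Phi_{d}(1)$ with $d\ge 2$ is a positive integer.

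Next I would dispose of the prime-power case $n=p^{k}$ by a direct computation: since the divisors of $p^{k}$ are exactly those of $p^{k-1}$ together with $p^{k}$ itself, the factorization above gives $\Phi_{p^{k}}(x)=(x^{p^{k}}-1)/(x^{p^{k-1}}-1)$, and writing $y=x^{p^{k-1}}$ this equals $1+y+\cdots+y^{p-1}$, which takes the value $p$ at $x=1$. Hence $\Phi_{p^{k}}(1)=p$.

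Finally I would treat the general case by strong induction on $n\geq 2$, the prime-power case being settled above. Assume $n$ is not a prime power and write $n=\prod_{i=1}^{r}p_{i}^{a_{i}}$ with $r\geq 2$. Splitting the divisors $d\geq 2$ of $n$ into the prime-power ones, namely $p_{i}^{j}$ for $1\leq i\leq r$ and $1\leq j\leq a_{i}$, and the rest, the prime-power divisors contribute $\prod_{i=1}^{r}\prod_{j=1}^{a_{i}}\Phi_{p_{i}^{j}}(1)=\prod_{i=1}^{r}p_{i}^{a_{i}}=n$ to the key identity; dividing by this leaves $\prod_{d}\Phi_{d}(1)=1$, the product being over divisors $d\geq 2$ of $n$ that are not prime powers. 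Each such $d$ with $d<n$ satisfies $\Phi_{d}(1)=1$ by the induction hypothesis, and since all the factors are positive integers the remaining factor $\Phi_{n}(1)$ must also equal $1$.

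I do not expect a real obstacle here: the only subtlety is that one cannot substitute $x=1$ directly into $x^{n}-1=\prod_{d\mid n}\Phi_{d}(x)$ since both sides vanish, which is why one first extracts the factor $x-1$; everything else is bookkeeping over divisors. As an alternative route, the relation $\Phi_{n}(x)=\prod_{d\mid n}(x^{d}-1)^{\mu(n/d)}$ combined with $\sum_{d\mid n}\mu(d)=0$ for $n\geq 2$ gives $\Phi_{n}(1)=\prod_{d\mid n}d^{\mu(n/d)}$, and taking logarithms identifies $\log\Phi_{n}(1)$ with the von Mangoldt function $\Lambda(n)$, equal to $\log p$ when $n=p^{k}$ and to $0$ otherwise.
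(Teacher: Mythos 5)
Your argument is correct. Note that the paper offers no proof of this lemma at all — it is listed among facts that are ``well-known'' — so there is no internal argument to compare against; what you have written is a complete and standard justification. Your main route (divide $x^{n}-1=\prod_{d\mid n}\Phi_{d}(x)$ by $x-1$, evaluate at $x=1$ to get $\prod_{d\mid n,\,d\ge 2}\Phi_{d}(1)=n$, compute the prime-power case directly, and remove prime-power divisors by strong induction) is sound, and the only step you leave implicit is that $\Phi_{d}$ has integer coefficients, which in fact you never really need: once the induction hypothesis gives $\Phi_{d}(1)=1$ for every non-prime-power divisor $d<n$, the identity forces $\Phi_{n}(1)=1$ without any appeal to positivity or integrality. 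Your prime-power computation $\Phi_{p^{k}}(x)=(x^{p^{k}}-1)/(x^{p^{k-1}}-1)=1+y+\cdots+y^{p-1}$ with $y=x^{p^{k-1}}$ is exactly the content of the lemma the paper states immediately afterwards ($\Phi_{p^k}(x)=\Phi_p(x^{p^{k-1}})$, whose displayed sum should start at $i=0$), so you have in effect proved both statements at once. The Möbius-function aside is also fine as a sketch, provided one cancels the factors of $x-1$ using $\sum_{d\mid n}\mu(d)=0$ before substituting $x=1$, as you indicate; the identification of $\log\Phi_{n}(1)$ with the von Mangoldt function $\Lambda(n)$ is the slickest way to remember the statement.
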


\begin{lemma}
	Suppose $n=p^k$ is a prime power. Then 
	\[
	\Phi_n(x)= \Phi_p(x^{p^{k-1}})= \sum_{i=1}^{p-1} x^{i p^{k-1}}.
	\] 
\end{lemma}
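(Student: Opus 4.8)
The plan is to derive the identity from the fundamental factorization $\prod_{d\mid n}\Phi_d(x)=x^n-1$, specialized to the prime power $n=p^k$. First I would observe that the divisors of $p^k$ are exactly $1,p,p^2,\dots,p^k$, so this factorization reads
\[
\prod_{j=0}^{k}\Phi_{p^j}(x)=x^{p^k}-1,
\]
and the same identity applied to $p^{k-1}$ gives $\prod_{j=0}^{k-1}\Phi_{p^j}(x)=x^{p^{k-1}}-1$. Dividing the first relation by the second cancels all the lower factors and isolates the top one, yielding the closed form
\[
\Phi_{p^k}(x)=\frac{x^{p^k}-1}{x^{p^{k-1}}-1}.
\]

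Next I would make the substitution $y=x^{p^{k-1}}$, under which $x^{p^k}=(x^{p^{k-1}})^{p}=y^{p}$. The expression above then collapses to a geometric sum:
\[
\Phi_{p^k}(x)=\frac{y^{p}-1}{y-1}=1+y+\cdots+y^{p-1}=\Phi_p(y)=\Phi_p\!\left(x^{p^{k-1}}\right),
\]
where the middle equality is simply the definition $\Phi_p(y)=(y^p-1)/(y-1)$ valid for the prime $p$. Expanding $\Phi_p(x^{p^{k-1}})=\sum_{i=0}^{p-1}x^{i p^{k-1}}$ then gives the stated sum and completes the argument.

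There is essentially no serious obstacle here, as this is an elementary polynomial identity; the only points deserving a word are that $y-1=x^{p^{k-1}}-1$ divides $y^{p}-1=x^{p^k}-1$ exactly in $\Z[x]$ (immediate from the geometric series, so the quotient is a genuine polynomial and not merely an element of the field of fractions), and that the divisors of $p^k$ are precisely the powers $p^j$ with $0\le j\le k$. As an alternative one could argue directly on roots: both $\Phi_{p^k}(x)$ and $\Phi_p(x^{p^{k-1}})$ are monic of degree $\varphi(p^k)=(p-1)p^{k-1}$, and a number $\zeta$ is a root of the latter precisely when $\zeta^{p^{k-1}}$ has order $p$, which is equivalent to $\zeta$ having order $p^k$; hence the two monic polynomials have the same roots, namely the primitive $p^k$-th roots of unity, and therefore coincide.
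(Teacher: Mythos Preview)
Your argument is correct. The paper does not actually provide a proof of this lemma: it is introduced, together with the preceding lemma on $\Phi_n(1)$, under the remark that these ``facts are well-known,'' and no argument is given. Your derivation from the divisor factorization $\prod_{d\mid n}\Phi_d(x)=x^n-1$ is the standard one, and the alternative root-counting argument you sketch is equally valid.

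One small observation: you correctly obtain $\Phi_p(x^{p^{k-1}})=\sum_{i=0}^{p-1}x^{\,i p^{k-1}}$, whereas the lemma as printed has the sum starting at $i=1$. This is a typographical slip in the paper's statement (the constant term $1$ is omitted); your version is the correct one.
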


\subsection{Characterization  of $L$ and $g$} 

Next  we will try to classify all even lattice $L$ with $L_2=\emptyset$ and completely fixed point free $g\in O(L)$ so that $V_L^{\hat{g}}$ has an extra automorphism. 

From now on, we always assume that $g$ is completely fixed point free and $L_2=\emptyset$.  We also assume that $V_L^{\hat{g}}$ has an extra automorphism $\sigma$.  

By \cite[Theorem 2.1]{Sh07}, the extra automorphism $\sigma$ does not preserve the set 
\[
\{ V_L(r)\circ \sigma \mid 0\leq r\leq |g|-1\}.  
\]
under the conjugate action.  In other words, $V_L(1)\circ \sigma$ is isomorphic to a simple current module of $V_L^{\hat{g}}$ 
not containing in $\{ V_L(r) \mid 1\leq r\leq |g|-1 \}$. 
By the classification of simple current modules of $V_L^{\hat{g}}$, $V_L(1)\circ \sigma$ is either isomorphic to 
\begin{enumerate}[(I)]
\item  $V_{\lambda+L}(r)$ for some $\lambda\in L^*\setminus L$ with $(1-g)\lambda\in L$ and $0\leq r\leq n-1$; or 
 
\item an irreducible $V_L^{\hat{g}}$-submodule $V_{\lambda+L}^T[\hat{g}^s](j)$ for some $1\leq s \leq n-1$ and for some $0\leq j \leq n-1$. 
\end{enumerate}

\subsection{Case (I): $\sigma$-conjugation of $V_L(1)$ is of untwisted type}\label{S:1}

By the assumption, $V_{\lambda+L}(r)$ is a simple current module of $V_L^{\hat{g}}$ and hence $(1-g)\lambda \in L$ (see for example \cite{ALY} or \cite{Lam19}).
Since $g$ is completely fixed point free of order $n$, the minimal polynomial of $g$ on $L$ is the $n$-th cyclotomic polynomial $\Phi_n(x)$
and the characteristic polynomial of $g$ on $L$ is  $\Phi_n(x)^{\ell/\varphi(n)}$, where $\ell=\mathrm{rank}(L)$  
and $\varphi$ is the Euler totient function. 
Therefore,
\[  
\dim V_L(j)_1= 
\begin{cases}
\frac{\ell}{\varphi(n)},  & \text{ if }(j,n)=1,\\
0,& \text{ otherwise.} 
\end{cases}
\]
Hence, we have $\dim V_{\lambda+L}(r)_1= \frac{\ell}{\varphi(n)}$, also. 
Since $g$ stabilizes $\lambda+L$, it induces an isometry on $\lambda+L$. Since $g$ is completely fixed point free,  $g^i\alpha\neq \alpha$ for any $\alpha\in \lambda+L$ and $1\leq i\leq n-1$ and thus we have $\dim V_{\lambda+L}(r)_1=|(\lambda+L)(2)|/n$ for any $0\leq r\leq n-1$. Therefore,  
\begin{equation}
|(\lambda+L)(2)|= \frac{n}{\varphi(n)}\cdot\mathrm{rank}\,L.\label{Eq:lambda2}
\end{equation}
In particular,  $ (\lambda+L)(2) \neq \emptyset$.

Since $\Phi_n(g) \lambda=0$ and $g$ stabilizes $\lambda+L$, we have $\Phi_n(1)\lambda \in L$. 
By Lemma \ref{phi(1)},  $\Phi_n(1)=1$ if $n$ is not a prime power and $\Phi_n(1)=p$ if $n=p^t$ is a prime power. 

Now set $N={\rm Span}_\Z\{L, \lambda\}$.  Then we have $|N/L|=1$ if $n$ is not a prime power and $|N/L|=p$ if $n=p^t$ is not a prime power. 

By our assumption, $L(2)=\emptyset$  but $N(2) \neq \emptyset$. Therefore, $|N/L|>1$ and $n$ must be a prime power, say $p^t$ and  $|N/L|= p$.   Since $(1-g)\lambda\in L$, $\hat{g}$ stabilizes  $V_{j\lambda+L}$ for any $0\leq j\leq p-1$ and hence $\hat{g}$ also acts on $V_N$. In particular, for each $j$,
\[
V_{j\lambda+L} = \oplus_{s=0}^{n-1}  V_{j\lambda+L}(s), 
\] 
where $V_{j\lambda+L}(s)= \{ v\in V_{j\lambda+L}\mid  \hat{g} v=  e^{2s\pi\sqrt{-1}/ n} v \} $.
 
 \begin{lemma} \label{fusion1}
 For  $u\in V_{j\lambda+L}(s)$, $v\in V_{j'\lambda+L}(s')$ and $n\in \Z$, we have 
 \[
 u_nv \in  V_{(j+j')\lambda +L} (s+s'). 
  \]
 In particular, we have  $V_{j\lambda+L}(s) \boxtimes_{V_L^{\hat{g}}} V_{j'\lambda+L}(s') \cong  V_{(j+j')\lambda +L} (s+s')$. 
 \end{lemma}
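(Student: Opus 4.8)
\textbf{Proof proposal for Lemma \ref{fusion1}.}

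The plan is to establish the membership statement $u_n v \in V_{(j+j')\lambda+L}(s+s')$ by combining two independent gradings that are compatible with the vertex operation on $V_N$, and then deduce the fusion-product identity from the fact that the spaces involved are simple current modules of $V_L^{\hat g}$. For the first part I would argue as follows. The lattice VOA $V_N$ decomposes as $V_N = \bigoplus_{j=0}^{p-1} V_{j\lambda+L}$, and this is a $\Z/p\Z$-grading of $V_N$ in the sense that $u \in V_{j\lambda+L}$, $v \in V_{j'\lambda+L}$ implies $u_n v \in V_{(j+j')\lambda+L}$ for all $n\in\Z$; this is immediate from the standard formula for vertex operators on a lattice VOA, where the ``charge'' in $N/L$ is simply added. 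Simultaneously, since $(1-g)\lambda\in L$, the automorphism $\hat g$ of $V_L$ extends to an automorphism of $V_N$ (as noted in the paragraph preceding the lemma), and the eigenspace decomposition of $V_N$ under $\hat g$ into the spaces $V_{j\lambda+L}(s)$ is a $\Z/n\Z$-grading: if $\hat g u = e^{2\pi i s/n} u$ and $\hat g v = e^{2\pi i s'/n} v$, then since $\hat g$ is a VOA automorphism, $\hat g(u_n v) = (\hat g u)_n (\hat g v) = e^{2\pi i (s+s')/n} u_n v$. Intersecting the two gradings gives precisely $u_n v \in V_{(j+j')\lambda+L}(s+s')$.

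For the second part, the key point is that each $V_{j\lambda+L}(s)$ is an irreducible $V_L^{\hat g}$-module (indeed a simple current, as recalled earlier in the excerpt: $(1-g)\lambda\in L$ forces $V_{j\lambda+L}(s)$ to be a simple current of $V_L^{\hat g}$), so that the fusion product $V_{j\lambda+L}(s) \boxtimes_{V_L^{\hat g}} V_{j'\lambda+L}(s')$ is again a single irreducible module. The membership statement just proved shows that there is a nonzero intertwining operator of the appropriate type realized inside $V_N$, namely the restriction of $Y$ to these subspaces, landing in $V_{(j+j')\lambda+L}(s+s')$; equivalently, $\hom_{V_L^{\hat g}}\!\big(V_{j\lambda+L}(s)\boxtimes V_{j'\lambda+L}(s'),\, V_{(j+j')\lambda+L}(s+s')\big)\neq 0$. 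Since the fusion product of simple currents is irreducible and the target is irreducible, a nonzero homomorphism between them is an isomorphism, which gives the claimed identity.

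I do not expect any serious obstacle here; the statement is essentially bookkeeping. The only point requiring a little care is making sure the ``intertwining operator inside $V_N$'' argument is phrased correctly: one should note that the nonvanishing of $u_n v$ for suitable choices (which follows because $V_N$ is a simple VOA, so $Y$ restricted to any pair of nonzero graded subspaces is nonzero) is what forces the relevant $\hom$-space to be nonzero, rather than merely asserting existence of an intertwining operator abstractly. Alternatively, one can bypass the fusion-rules language entirely and simply cite the general description of fusion among simple current modules $V_{\mu+L}(s)$ of $V_L^{\hat g}$ (e.g.\ from \cite{Lam19} or \cite{ALY}), under which the fusion group is an extension of $N/L$ by $\Z/n\Z$ and the product is given by addition of the two indices; the membership computation above then just re-proves the relevant special case. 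Either route is short.
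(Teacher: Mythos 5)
Your proposal is correct, but there is nothing in the paper to compare it against: Lemma \ref{fusion1} is stated without proof, the authors evidently regarding it as standard bookkeeping. Your argument supplies exactly the reasoning being taken for granted. The first part — intersecting the $N/L$-charge grading of $V_N$ (vertex operators add charges in a lattice VOA) with the $\hat{g}$-eigenspace grading (using $\hat{g}(u_nv)=(\hat{g}u)_n(\hat{g}v)$) — is the natural route, and your care on the second part is well placed: the restriction of $Y$ gives an intertwining operator of the required type among $V_L^{\hat{g}}$-modules, it is nonzero because $V_N$ is simple (so $Y(u,z)v\neq 0$ for nonzero $u,v$, cf. \cite{DL}), and since $V_{j\lambda+L}(s)$, $V_{j'\lambda+L}(s')$ are simple currents of the rational, $C_2$-cofinite VOA $V_L^{\hat{g}}$, their fusion product is irreducible, so a nonzero homomorphism onto the irreducible target is an isomorphism. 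The alternative you mention — citing the group-like fusion rules for the modules $V_{\mu+L}(s)$ from \cite{Lam19} or \cite{ALY}, under which the product is addition of the two indices — is precisely what the paper's earlier references point to, so either route is consistent with the authors' intent. The only point deserving an explicit sentence in a full write-up is the one you rely on from the preceding paragraph of the paper: that $\hat{g}$ genuinely acts as a VOA automorphism of $V_N$ (i.e. the lift of $g$ to $O(\hat{N})$ can be chosen restricting to the given $\hat{g}$ on $V_L$), since the eigenvalue bookkeeping $s+s'$ presupposes that the operator defining the eigenspaces $V_{j\lambda+L}(s)$ is this restriction; the paper asserts this, so you are entitled to it.
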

 
Now assume that $n=p^t$ is a prime power and set $m=n/p= p^{t-1}$.  Let $h=g^m$. Then $h$ is fixed point free of order $p$ on $L$. Moreover, we have 

(1)  $|N(2)| = \sum_{i=1}^{p-1} |(i\lambda +L)(2)| =(p-1) \frac{p^t\ell}{p^{t-1}(p-1)}=p\ell$. 

(2)  $h(\lambda+L) =\lambda+L$.

By \cite[Proposition 1.8]{Sh04} and \cite[Theorem 4.19]{LS21},  we have the following result.

\begin{lemma}\label{Lem:Ap}
The sublattice of $N$ spanned by $N(2)$ is isometric to the orthogonal sum of $k$ copies of $A_{p-1}$, where $k=\ell/(p-1)$. Therefore, $N$ can be obtained by construction A from a certain code $C$ over $\Z_p$ and  $L$ can be obtained by construction B from the same code $C$.  
\end{lemma}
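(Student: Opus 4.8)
\textbf{Proof proposal for Lemma \ref{Lem:Ap}.}

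The plan is to analyze the sublattice $R \subseteq N$ generated by the norm-$2$ vectors $N(2)$ using the fixed-point-free isometry $h = g^m$ of order $p$ acting on $N$. First I would record the key numerical input already established: $|N(2)| = p\ell$, where $\ell = \operatorname{rank} L = \operatorname{rank} N$, and $N(2) \cap L = \emptyset$ since $L(2) = \emptyset$. Because $N(2)$ spans $N$ over $\Q$ (its cardinality forces a full-rank root system once we show it \emph{is} a root system — or one argues directly that $R$ has finite index in $N$, which suffices), the first substantive step is to show that $R$ is a root lattice whose root system is a union of copies of $A_{p-1}$. For this I would invoke \cite[Proposition 1.8]{Sh04} and \cite[Theorem 4.19]{LS21}: the former controls which rootless-related configurations arise from a fixed-point-free order-$p$ isometry with no norm-$2$ vectors in the fixed lattice, and the latter identifies the root system of the span of norm-$2$ vectors in exactly this orbifold setting as $k \cdot A_{p-1}$. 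The counting $|N(2)| = p\ell$ together with $|A_{p-1}(2)| = p(p-1)$ pins down the number of components: $k(p-1) = \ell$, i.e. $k = \ell/(p-1)$, which also shows $p-1 \mid \ell$ as a side consequence.

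The second step is to upgrade "$R$ is a root lattice of type $k A_{p-1}$ contained in $N$" to "$N$ arises from Construction A over $\Z_p$." Here I would use the standard fact that the glue group $N/R$ embeds into the discriminant group $\mathcal D(R) = \mathcal D(A_{p-1})^{\oplus k} \cong (\Z_p)^{\oplus k}$, so $N/R$ is elementary abelian $p$-group, and the image defines a code $C \subseteq (\Z_p)^k$ with $N = R + \{\text{lifts of codewords}\}$ — precisely Construction A applied to $C$. One must check that the bilinear form on $N$ matches the Construction-A form; this follows because the discriminant form of $A_{p-1}$ is the standard one ($\Z_p$ with quadratic form $\tfrac{p-1}{p}x^2 \bmod 2\Z$, up to the usual normalization), and evenness of $N$ forces $C$ to be self-orthogonal with respect to the relevant form.

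The third and final step is to identify $L$ inside this picture. Since $L \subseteq N$ with $[N:L] = p$ and $L(2) = \emptyset$, the lattice $L$ is the kernel of the composite $N \to N/R \to \Z_p$ given by the distinguished glue coordinate $\lambda$ — equivalently, $L$ consists of those vectors of $N$ whose "first coordinate" in the $\Z_p^k$ glue vanishes. This is exactly the recipe for Construction B from the same code $C$: Construction B takes the Construction-A lattice and passes to the index-$p$ sublattice cut out by one linear condition modulo $p$. I would verify that the vector $\lambda$ chosen at the start of Section \ref{S:1} plays the role of the omitted glue vector and that $(1-g)\lambda \in L$ is consistent with $h\lambda - \lambda \in pR \subseteq L$ (using $h = g^m$ and $\Phi_n(g)\lambda = 0$). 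The main obstacle I anticipate is not any single computation but rather the careful bookkeeping needed to match the abstract classification results of \cite{Sh04,LS21} — which are stated for a fixed-point-free order-$p$ isometry — against the present situation where the ambient isometry $g$ has order $n = p^t$ and only its power $h = g^m$ has order $p$; one must check that $h$ satisfies all the hypotheses of those results on $N$ (fixed-point freeness on $N$, the relevant action on $N(2)$), which is where properties (1) and (2) above, together with $h(\lambda+L) = \lambda+L$, do the work.
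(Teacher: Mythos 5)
Your overall skeleton is the same as the paper's: the paper, too, establishes exactly the two inputs you isolate, namely $|N(2)|=p\ell$ with $N(2)\cap L=\emptyset$ and the fact that $h=g^{m}$ is a fixed point free isometry of order $p$ preserving $\lambda+L$ (hence acting on $N$), and then obtains the whole lemma as a direct application of \cite[Proposition 1.8]{Sh04} and \cite[Theorem 4.19]{LS21}; your root count $k\cdot p(p-1)=p\ell$, giving $k=\ell/(p-1)$, and your glue-group argument $N/R\hookrightarrow \mathcal{D}(A_{p-1})^{\oplus k}\cong \Z_p^{k}$ for the Construction A part are consistent with that. (Your parenthetical worry about $N(2)$ spanning a full-rank sublattice is legitimate but is absorbed by the cited results.)

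There is, however, a genuine error in your third step. You describe $L$ as ``the kernel of the composite $N\to N/R\to \Z_p$ given by the distinguished glue coordinate,'' equivalently the set of vectors of $N$ whose first glue coordinate vanishes, and you present this as the recipe for Construction B. Any $\Z_p$-valued functional on $N$ that factors through $N/R$ vanishes on all of $R$, so its kernel contains $R\cong A_{p-1}^{k}$ and in particular contains norm-$2$ vectors; it therefore cannot equal $L$, which satisfies $L(2)=\emptyset$ (indeed $R\not\subseteq L$ precisely because every root of $N$ lies in a nontrivial coset $i\lambda+L$). The index-$p$ sublattice produced by Construction B is cut out by a congruence that does \emph{not} factor through $N/R$ and, in the present situation, must be nonzero on every element of $N(2)$ (the $p$-ary analogue of the binary condition $\sum_i x_i\equiv 0\pmod 4$), which is exactly what allows it to be rootless. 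Since the identification of $L$ as a Construction B lattice is one of the two assertions of the lemma, this step needs repair: either quote \cite[Theorem 4.19]{LS21} in full, as the paper does, which already yields that $N$ is obtained by Construction A and $L$ by Construction B from the same code once the hypotheses you verified for $h$, $L$, $N$ hold, or else exhibit the correct functional on $N$ and check that it vanishes on $L$ but on no root. Your concluding consistency check ($h\lambda-\lambda\in pR\subseteq L$) is harmless but does not substitute for this identification.
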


Let $R=\oplus_{i=1}^k R_i \cong A_{p-1}^k$ be the root lattice spanned by $N(2)$, where $R_i\cong A_{p-1}$ is a simple root lattice of type $A_{p-1}$. Let $\Delta_i=\{\alpha_1^i, \dots,\alpha_{p-1}^i \}, i=1, \dots, k,$ be a set of simple roots (or a base) of $R_i$ and denote $\Delta=\cup_{i=1}^k {\Delta_i}$ . For each $i$, let $g_{\Delta_i}$ be 
a cyclic permutation of order $p$ on $\tilde{\Delta}_i$
such that 
\[
\alpha_1^i \mapsto \alpha_{2}^i \mapsto \dots \mapsto \alpha_{p-1}^i \mapsto \alpha_0^i \mapsto  \alpha_1^i, 
\]
where $\tilde{\Delta}=\Delta\cup\{\alpha_0^i\}$ and $\alpha_0^i$ is the negated highest root.

Note that $A_{p-1}^*/A_{p-1}\cong \Z_p$ and $N/R< \Z_p^k$. For $e=(e_1, \dots, e_k)\in\bigoplus_{i=1}^t\Z_{p}^*$, set 
\begin{equation}
	g_{\Delta,e}=((g_{\Delta_1})^{e_1},\dots,(g_{\Delta_k})^{e_k})\in O(N). \label{Eq:gde}
\end{equation}
Then  $g_{\Delta,e}$ is fixed point free of order $p$ on $N$. 

By \cite[Theorem 4.19]{LS21} again,  $h$ corresponds to $g_{\Delta,e}$ with respect to some $e\in C^\perp$ of Hamming weight $k$. 


By \cite[Theorem 5.3]{LS21}, there is  a standard lift $\hat{h}$ of $h$ and an automorphism $\sigma\in V_L^{\hat{h}}$ such that  $V_L\circ \sigma \cong V_N^{\hat{h}}$ as  $V_L^{\hat{h}}$-modules. 
By adjusting the lift $\hat{g}$ of $g$, we may also assume $\hat{h}=\hat{g}^m$, where $m=n/p$ (cf. \cite[Lemma 4.5]{LS20}). 

\medskip

In this case, we have  
\[
V_L(1; \hat{h})\circ \sigma \cong V_{\lambda+L}^{\hat{h}}, 
\] 
where $V_L(j; \hat{h})=\{v\in V_L\mid\hat{h}v= e^{2\pi \sqrt{-1}\frac{j}{p} } v\}$. 
Note also  that $V_{\lambda+L}^{\hat{h}}= \bigoplus_{i=1}^{m-1} V_{\lambda+L}(ip; \hat{g})$. 

Since  $V_L(1) \circ \sigma \cong  V_{\lambda +L}(r)$ and $n$ is the smallest integer such that $V_L(1)^{\boxtimes n} \cong V_L(0)$, we have $ V_{\lambda+L}(r)^{\boxtimes s} \cong V_{s\lambda+L}(sr)\ncong V_L(0)$  if  $s <n$.   
Moreover,  $ V_{\lambda+L}(r)^{\boxtimes s} \cong V_{s\lambda+L}(sr)\cong V_L(0)$ if and only if  $p|s$ and $sr\equiv 0\mod n$.      
 Therefore,  $(m,r)=1$.  On the other hand, we have  $V_{\lambda+L}(r)^{\boxtimes (1+ip)} \cong V_{\lambda+L}(r+ irp)$ and  thus 
\[
V_L(1; \hat{h})\circ \sigma =\bigoplus_{i=1}^{m-1} V_{L}(1+ip; \hat{g})\circ \sigma= 
\bigoplus_{i=1}^{m-1} V_{\lambda+L}(r+ip; \hat{g}).
\]
Therefore, we have $r\equiv 0\mod p$ and thus $(p,m)=1$; nevertheless, $n=p^t$ is a prime power and thus $m=n/p=1$ and $n=p$ is a prime number.  Recall that the case when $n=p$ is a prime has been studied in details in \cite{LS21}.   It turns out that $L$ is always obtained by Construction B  from a code over $\Z_p$ and $g$ corresponds to a Coxeter element of a root system of type $A_{p-1}^{\ell/(p-1)}$ (cf. \cite[Main Theorem]{LS21}). 

\medskip 

\subsection{Case (II): $\sigma$-conjugation of $V_L(1)$ is of twisted type}\label{S:twistedtype}

In this case,  $V_L(1)\circ \sigma \cong V_{\mu+L}^T[\hat{g}^s](j)$ for some $ 1\leq s\leq n-1$ and $0\leq  j \leq n-1$.

Since $g^s$ is still  fixed point free on $L$ for any $ 1\leq s\leq n-1$, the irreducible $\hat{g}^s$-twisted module $V_{\mu+L}^T[\hat{g}^s]$ is given by 
\begin{equation*}
	V_{\lambda+L}^T[\hat{g}^s]= M(1)[{g}^s]\otimes T_{\tilde{\mu}}. 
\end{equation*}

Let $d=(n,s)$ and $m=n/d$. Then $\hat{g}^s$ has order $m$ and the conformal weight of $V_{\lambda+L}^T[\hat{g}^s]$ (see \cite{Le,DL}) is given by 
\begin{equation}
	\varepsilon(s)=\frac{1}{4m^2} \sum_{i=1}^{m-1} i(m-i) \dim \mathfrak{h}_{(i; g^s)}.\label{Eq:esp}
\end{equation}
Since $g$ is completely fixed point free, $\mathfrak{h}_{(i; g^s)} =0$ unless  $(m,i)=1$. Moreover,  $ \dim \mathfrak{h}_{(i; g^s)}= \dim \mathfrak{h}_{(j; g^s)}$ if $(m,i)=(m,j)=1$. 
Thus, 
\[
	\begin{split}
		\varepsilon(s)& = \frac{1}{4m^2} 	\left(\sum_{1\leq i \leq m-1,\atop (m,i)=1} i(m-i) \right) \cdot \frac{\ell}{\varphi(m)}, \\
	\end{split}
\]

We now compute the values of $\displaystyle \sum_{1\leq i \leq m-1,\atop (m,i)=1} i(m-i)$. 

\begin{lemma}\label{sumi(n-i)}
	Let  $n=\prod_{i=1}^k p_i^{r_i}$ be the prime factorization of $n$, where $p_i, 1\leq i\leq k,$ are prime numbers. Let $I\subset \{p_1, p_2, \dots, p_k\}$ be a subset of primes.  
	Then 
	\[
	\sum_{1\leq i\leq n-1,\atop (p_j,i)=1, p_j\in I} i(n-1)= \frac{1}{6} n \prod_{p_j\in I}(1-\frac{1}{p_j})\left [n^2+(-1)^{|I|+1} \prod_{p_j\in I} p_j \right].
	\]
\end{lemma}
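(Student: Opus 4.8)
The plan is to prove Lemma~\ref{sumi(n-i)} by inclusion--exclusion, reducing the sum with several coprimality constraints to sums of the form $\sum_{1\le i\le n-1} i(n-i)$ over arithmetic progressions. First I would expand $i(n-i)=ni-i^2$ and record the two elementary identities $\sum_{i=1}^{n-1} i = \tfrac12 n(n-1)$ and $\sum_{i=1}^{n-1} i^2 = \tfrac16 (n-1)n(2n-1)$, from which $\sum_{i=1}^{n-1} i(n-i) = \tfrac16(n^3-n) = \tfrac16 n(n^2-1)$. This is the $I=\emptyset$ case, and it matches the claimed formula with the convention that an empty product equals $1$ (so the bracket reads $n^2-1$ and $(-1)^{|I|+1}=-1$).

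Next, for a nonempty $I=\{p_{j_1},\dots,p_{j_r}\}$, I would use inclusion--exclusion on the events ``$p_{j}\mid i$'' for $p_j\in I$: writing $S(n)=\sum_{i=1}^{n-1} i(n-i)$ and, for a squarefree $d\mid n$, $S_d(n)=\sum_{1\le i\le n-1,\, d\mid i} i(n-i)$, we have
\[
\sum_{\substack{1\le i\le n-1\\ (p_j,i)=1,\ p_j\in I}} i(n-i) \;=\; \sum_{d\mid P_I} \mu(d)\, S_d(n),
\]
where $P_I=\prod_{p_j\in I} p_j$ and $\mu$ is the M\"obius function. The key computation is that for $d\mid n$, substituting $i=dk$ with $1\le k\le n/d-1$ gives $S_d(n) = \sum_{k=1}^{n/d-1} dk\,(n-dk) = d^2\sum_{k=1}^{n/d-1} k\bigl(\tfrac{n}{d}-k\bigr) = d^2\cdot\tfrac16\cdot\tfrac{n}{d}\bigl((\tfrac{n}{d})^2-1\bigr) = \tfrac16 d n\bigl(\tfrac{n^2}{d^2}-1\bigr) = \tfrac16\bigl(\tfrac{n^3}{d} - dn\bigr)$.

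Plugging this in, the sum becomes $\tfrac16 n\sum_{d\mid P_I}\mu(d)\bigl(\tfrac{n^2}{d} - d\bigr) = \tfrac16 n\Bigl(n^2\sum_{d\mid P_I}\tfrac{\mu(d)}{d} - \sum_{d\mid P_I}\mu(d)\,d\Bigr)$. The first multiplicative sum is $\prod_{p_j\in I}(1-\tfrac1{p_j})$, and the second is $\prod_{p_j\in I}(1-p_j) = (-1)^{|I|}\prod_{p_j\in I}(p_j-1)$. So I would factor $\prod_{p_j\in I}(1-\tfrac1{p_j})$ out of both terms, using $(1-p_j) = -p_j(1-\tfrac1{p_j})$ to rewrite $\prod_{p_j\in I}(1-p_j) = (-1)^{|I|}\bigl(\prod_{p_j\in I}p_j\bigr)\prod_{p_j\in I}(1-\tfrac1{p_j}) = (-1)^{|I|} P_I \prod_{p_j\in I}(1-\tfrac1{p_j})$, which yields
\[
\tfrac16 n\prod_{p_j\in I}\Bigl(1-\tfrac1{p_j}\Bigr)\bigl[n^2 - (-1)^{|I|} P_I\bigr] = \tfrac16 n\prod_{p_j\in I}\Bigl(1-\tfrac1{p_j}\Bigr)\bigl[n^2 + (-1)^{|I|+1} P_I\bigr],
\]
exactly the asserted expression. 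I expect no serious obstacle here: the only points requiring care are the bookkeeping of empty-product conventions in the $I=\emptyset$ case and the sign manipulation $(1-p_j)=-p_j(1-1/p_j)$ that converts the M\"obius-weighted divisor sum into the stated product times $P_I$. (I also note the statement's right-hand side writes $i(n-1)$, evidently a typo for $i(n-i)$; the proof is for $i(n-i)$ as used in the derivation of $\varepsilon(s)$ above.)
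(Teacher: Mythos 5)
Your proof is correct, but it takes a genuinely different route from the paper. The paper proves the identity by induction on $|I|$: it first handles $|I|=1$ by writing the constrained sum as $\sum_{i=1}^{n-1} i(n-i)$ minus the subsum over multiples of $p$ (rescaled via $i=pk$), and then in the inductive step peels off one additional prime $p\notin I$ in exactly the same way, each time invoking the base identity $\sum_{i=1}^{m-1} i(m-i)=\tfrac16 m(m-1)(m+1)$. You instead do the full inclusion--exclusion in one stroke: writing the coprimality condition as $\sum_{d\mid P_I}\mu(d)[d\mid i]$, computing $S_d(n)=\tfrac16\bigl(\tfrac{n^3}{d}-dn\bigr)$ for each squarefree $d\mid P_I$ (which is legitimate since every such $d$ divides $n$), and then evaluating the two multiplicative divisor sums $\sum\mu(d)/d=\prod(1-\tfrac1{p_j})$ and $\sum\mu(d)d=\prod(1-p_j)=(-1)^{|I|}P_I\prod(1-\tfrac1{p_j})$. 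The underlying mechanism is the same (the paper's induction is just inclusion--exclusion unrolled one prime at a time), but your version replaces the inductive bookkeeping of signs and products with two standard M\"obius identities, which makes the sign $(-1)^{|I|+1}$ and the factor $\prod_{p_j\in I}p_j$ appear transparently; the paper's version is more elementary in that it never names $\mu$ and only ever uses the single quadratic sum formula. You are also right that the $i(n-1)$ in the statement is a typo for $i(n-i)$: the paper's own proof and its application in Corollary \ref{sum} and \eqref{phis} use $i(n-i)$ throughout, and your remark about the empty-product convention for $I=\emptyset$ is consistent with the stated right-hand side.
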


\begin{proof}
	We will prove the lemma by induction on $|I|$.
	
	First we recall (see for example \cite{LS21}) that 
	\[
	\sum_{i=1}^{n-1} i(n-i) = \frac{1}6 n(n-1)(n+1). 
	\] 
	
	If $n=p$ is a prime, then $(n,i)=1$ for any $1\leq i\leq n-1$ and 
	\[
	\sum_{1\leq i\leq p-1,\atop (p,i)=1} i(n-i) = \frac{1}6 p(p-1)(p+1)
	\]
	as desired.  We may assume that $n$ is not a prime.

	Suppose $I=\{p\}\subset \{p_1, p_2, \dots, p_k\}$, i.e., $|I|=1$ and assume that $n=p^rm$, where $(m,p)=1$.  Then  
	\[
	\begin{split}
		\sum_{1\leq i\leq n-1,\atop (p,i)=1} i(n-i)= &\sum_{1\leq i\leq n-1, 
		} i(n-i) -  \sum_{1\leq i\leq n-1,\,  p| i} i(n-i),\\
		=&  \frac{1}6 n(n-1)(n+1) -  p^2\cdot \frac{1}6 p^{r-1}m(p^{r-1}m-1)(p^{r-1}m+1), \\
		= &  \frac{1}6 n(p-1)(p^{2r-1}m^2+1)= \frac{1}6 n(1-\frac{1}p)(n^2+p). 
	\end{split}
	\]
Therefore, the lemma holds for $|I|=1$. 
	
Now assume that the lemma holds for any $I$ with $|I|<k$ and let $p\in \{p_1, p_2, \dots, p_k\}\setminus I$. Set $J=I\cup \{p\}$ and $n=p^r m$ with $(p,m)=1$. 
Then 
\[
\begin{split}
	\sum_{1\leq i\leq n-1,\atop {(p_j,i)=1, p_j\in J}} i(n-i)& = \sum_{1\leq i\leq n-1, 
		\atop (p_j,i)=1, p_j\in I} i(n-i) -  \sum_{1\leq i\leq n-1,\,  p| i
	\atop (p_j,i)=1, p_j\in I} i(n-i),\\
	&=  \frac{1}{6} n \prod_{p_j\in I}(1-\frac{1}{p_j})\left [n^2+(-1)^{|I|+1} \prod_{p_j\in I} p_j \right]\\
	 &\ \  - p^2\cdot \frac{1}{6}  p^{r-1}m \prod_{p_j\in I}(1-\frac{1}{p_j})\left [(p^{r-1}m)^2+(-1)^{|I|+1} \prod_{p_j\in I} p_j \right],\\
	&=  \frac{1}{6} n \prod_{p_j\in I}(1-\frac{1}{p_j}) \left[(p-1)(p^{2r-1}m^2+(-1)^{|I|+2} \prod_{i=1}^k p_i)\right], \\
	&=  \frac{1}{6} n \prod_{p_j\in I}(1-\frac{1}{p_j}) (1-\frac{1}{p})\left[n^2+(-1)^{|I|+2} p\prod_{i=1}^k p_i\right], \\
\end{split}
\]
as desired.	
\end{proof}

\begin{corollary}\label{sum}
	Let $n=\prod_{i=1}^{k} p_i^{r_i}$ be the prime factorization of $n$. Then we have 
	\[
	\sum_{1\leq i \leq n-1,\atop (n,i)=1} i(n-i)= \frac{1}{6} n \prod_{i=1}^k (1-\frac{1}{p_i})\left [n^2+(-1)^{k+1} \prod_{i=1}^k p_i \right] =\frac{\varphi(n)}{6}\left [n^2+(-1)^{k+1} \prod_{i=1}^k p_i \right].
	\]
\end{corollary}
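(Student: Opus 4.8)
The plan is to obtain Corollary \ref{sum} as the special case of Lemma \ref{sumi(n-i)} in which the set $I$ of primes is taken to be the complete list $\{p_1, p_2, \dots, p_k\}$ of prime divisors of $n$. The first thing to observe is that for an integer $i$ with $1 \le i \le n-1$ one has $(n,i) = 1$ precisely when $p_j \nmid i$ for every $j$, i.e.\ $(p_j, i) = 1$ for all $p_j \in \{p_1, \dots, p_k\}$; this is immediate from unique factorization. Consequently $\sum_{1 \le i \le n-1,\, (n,i)=1} i(n-i)$ is exactly the sum appearing on the left-hand side of Lemma \ref{sumi(n-i)} with $I = \{p_1, \dots, p_k\}$ and $|I| = k$, and feeding this into the lemma gives
\[
\sum_{1\leq i \leq n-1,\atop (n,i)=1} i(n-i) = \frac{1}{6}\, n \prod_{i=1}^{k}\left(1 - \frac{1}{p_i}\right)\left[ n^2 + (-1)^{k+1}\prod_{i=1}^{k} p_i \right],
\]
which is the first asserted identity.

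For the second identity I would invoke the classical product formula $\varphi(n) = n \prod_{i=1}^{k}\left(1 - \tfrac{1}{p_i}\right)$ for Euler's totient function and substitute $n \prod_{i=1}^k (1 - 1/p_i) = \varphi(n)$ into the expression just obtained, which yields $\tfrac{\varphi(n)}{6}\bigl[n^2 + (-1)^{k+1}\prod_{i=1}^k p_i\bigr]$, as claimed.

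Since the corollary is a direct specialization of the lemma together with a standard elementary fact, there is essentially no obstacle; the only point meriting a word of care is the translation of the coprimality condition, and that is settled by unique factorization. (Alternatively, one could bypass the lemma and prove the corollary directly by a M\"obius/inclusion--exclusion argument starting from $\sum_{i=1}^{n-1} i(n-i) = \tfrac{1}{6}n(n-1)(n+1)$, but specializing the lemma is the cleaner route.)
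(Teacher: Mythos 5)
Your proof is correct and matches the paper's (implicit) argument: the corollary is stated as an immediate specialization of Lemma \ref{sumi(n-i)} with $I=\{p_1,\dots,p_k\}$, combined with the product formula $\varphi(n)=n\prod_{i=1}^k\bigl(1-\tfrac{1}{p_i}\bigr)$, exactly as you do.
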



By Corollary \ref{sum}, we have
\begin{equation}\label{phis}
	\begin{split}
		\varepsilon(s)& = \frac{1}{4m^2} 	\left(\sum_{1\leq i \leq m-1,\atop (m,i)=1} i(m-i) \right) \cdot \frac{\ell}{\varphi(m)}, \\
		&= \frac{\ell}{24}\left[1+ \frac{(-1)^{t+1} \prod_{i=1}^{t} q_i}{m^2}\right],   
	\end{split}
\end{equation}
where $m=n/(n,s)$ and $m=\prod_{i=1}^t q_i^{a_i}$ is the prime factorization of $m$.

That $V_L(1)\circ \sigma\cong V_{\mu+L}^T[\hat{g}^s](j)$  implies 
$\varepsilon(s) \leq 1$ and $\varepsilon(s) \in \frac{1}m \Z$. In fact, $\varepsilon(s)= 1-1/m$ or $1$ because $\dim(V_{\mu+L}^T[\hat{g}^s](j))_1= \dim \mathfrak{h}_{(1; g)}$. 
In this case, $V_{\mu+L}^T[\hat{g}^s](j)$ is a simple current module for $V_L^{\hat{g}}$; hence we have $(1-g^s)L^*\leq L$ and $\dim T_{\tilde{\mu}} =[L: (1-g^s)L^*]^{1/2}$ (cf. \cite[Corollary 3.7]{ALY}).  
\medskip

\textbf{Case 1:  $\varepsilon(s)=1- 1/m$.}
Since  $\dim V_{\mu+L}^T[\hat{g}^s](j)_1= \dim V_L(1)_1= {\ell}/\varphi(n)$,  we have $\dim T_{\tilde{\mu}}=1$ and $L=(1-g^s)L^*$. Moreover, 
\[
|L^*/L|= |L/(1-g^s)L|=|\det(1-g^s)|.  
\]
Recall that the characteristics polynomial $\chi_{g^s}(x)=\det(g^s-xI)$ of $g^s$ is given by $\chi_{g^s}(x)=\Phi_m(x)^{\frac{\ell}{\varphi(m)}}$. 
By Lemma \ref{phi(1)},   
\[
|L^*/L|=
\begin{cases}
	1 & \text{ if } m \text{ is not a prime power},\\
	p^{\frac{\ell}{p^{k-1}(p-1)}} &\text{ if } m=p^k \text{ is a prime power}.
\end{cases}
\]

First, we suppose $m$ is a  not a prime power. Then $L$ is even unimodular and $L_2=\emptyset$. 
Moreover,  $$1 > \varepsilon(s) = \frac{\ell}{24}\left[1+ \frac{(-1)^{t+1} \prod_{i=1}^{t} q_i}{m^2}\right] \geq  \frac{\ell}{24}\left(1 -\frac{1}{m}\right).$$ 
Since $m$ is not a prime power,
$m\geq 6$ and we have $1 > \varepsilon(s) \geq \frac{\ell}{24}\left(1 -\frac{1}{6}\right)$; hence, $\ell < 29$. Therefore,   $L$ is isometric to the Leech lattice $\Lambda$; the unique even unimodular lattice of rank $<32$ and with no roots.    

In this case, $\ell=24$ and 
\[
\varepsilon(s) =1 + \frac{(-1)^{t+1}\prod_{i=1}^{t} q_i}{m^2} =1-\frac{1}m. 
\]
It implies $m=\prod_{i=1}^{t} q_i$ and $t$ is even. In addition, $\varphi(m)| 24$. By direct calculations, it is straightforward to show that $m=6,10,14,26,15,21$, or $39$.  The corresponding fixed point free isometry of the Leech lattice belongs to the conjugacy class $-3A, -5A$, $-7A$, $-13A$, $15A$,  $21A$, or $39A$.

\begin{theorem}\label{isoLeech}
	Let $g\in O(\Lambda)$ be of the conjugacy $-3A, -5A,-7A, -13A, 15A, 21A,$ or $39A$ and let $\hat{g}$ be a lift of $g$ in $\Aut(V_\Lambda)$. Then the VOA $V_\Lambda^{orb(g)}$ obtained by a orbifold construction from $V_\Lambda$ and $g$ is isomorphic to $V_\Lambda$.  
\end{theorem}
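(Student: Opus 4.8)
The plan is to show that the orbifold VOA $V_\Lambda^{orb(g)}$ is a holomorphic VOA of central charge $24$ that must be isomorphic to $V_\Lambda$ itself, by a combination of a uniqueness argument at the level of characters/weight-one spaces and the classification of holomorphic $c=24$ VOAs. First I would recall that, since $g$ is a completely fixed point free isometry of $\Lambda$ of order $n$ with $\Lambda^g=0$, every power $g^i$ ($1\le i\le n-1$) is also fixed point free, so every irreducible $\hat g^i$-twisted $V_\Lambda$-module is unique up to isomorphism (because $(L^*/L)^{g^i}$ is trivial, as $\Lambda$ is unimodular). Hence the cyclic orbifold construction of Evans--Lam / van Ekeren--Möller--Scheithauer applies with no anomaly obstruction and produces a holomorphic VOA $V_\Lambda^{orb(g)}$ of central charge $24$.

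Next I would compute the weight-one Lie algebra $(V_\Lambda^{orb(g)})_1$. This decomposes as $(V_\Lambda^{\hat g})_1 \oplus \bigoplus_{s=1}^{n-1}(V_\Lambda^T[\hat g^s](0))_1$ (only the $\hat g$-invariant, integrally graded pieces of the twisted sectors contribute). The untwisted fixed-point part $(V_\Lambda^{\hat g})_1$ is $\mathfrak h^g = 0$ since $g$ is fixed point free. For the twisted sectors, the conformal weight of $V_\Lambda^T[\hat g^s]$ is $\varepsilon(s)$ computed in \eqref{phis}; for the classes $-3A,-5A,-7A,-13A,15A,21A,39A$ we are in the subcase $\varepsilon(s)=1-1/m$ with $m=n/(n,s)>1$, so the conformal weight of the twisted module is never an integer unless... — more precisely one checks that for these classes the graded piece $V_\Lambda^T[\hat g^s](0)$ in degree $1$ is zero for all $s$, because $1\notin \varepsilon(s)+\tfrac1m\Z_{\ge 0}$ for $s\not\equiv 0$. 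Thus $(V_\Lambda^{orb(g)})_1=0$.

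Then I would invoke the classification of holomorphic VOAs of central charge $24$ (Schellekens' list, now a theorem): the unique such VOA with trivial weight-one space is the moonshine module $V^\natural$, while $V_\Lambda$ is the unique one whose weight-one Lie algebra is $\mathbb C^{24}$ abelian. This forces a dichotomy, and to rule out $V^\natural$ I would use the standard "inverse orbifold" argument: $V_\Lambda^{orb(g)}$ carries a canonical order-$n$ automorphism $\tilde g$ (coming from the $\Z_n$-grading of the orbifold) whose own cyclic orbifold returns $V_\Lambda$; if $V_\Lambda^{orb(g)}\cong V^\natural$ then $\tilde g$ corresponds to an order-$n$ automorphism of $V^\natural$ whose orbifold is $V_\Lambda$, and one checks against the known (McKay--Thompson / Conway--Norton) data that the relevant Frame shape / character does not match, or alternatively one compares the dimension of the degree-$2$ space: $\dim (V_\Lambda)_2 = 196884$ whereas the orbifold of $V^\natural$ by an automorphism in the appropriate class has a different value. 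Hence $V_\Lambda^{orb(g)}\cong V_\Lambda$.

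The main obstacle I expect is the last step — cleanly ruling out $V^\natural$ and, more subtly, making sure that having $(V_\Lambda^{orb(g)})_1 = 0$ and $c=24$ genuinely pins down the VOA. This relies on the full strength of the holomorphic $c=24$ classification, so I would want to either cite it directly or, to stay self-contained, replace it by the reverse-orbifold uniqueness: show that $(V_\Lambda^{orb(g)}, \tilde g)$ and $(V_\Lambda, g)$ are "orbifold dual" pairs and that the reverse orbifold of any holomorphic $c=24$ VOA with $V_1=0$ by a suitable order-$n$ automorphism reproducing a rank-$24$ abelian $V_1$ must be $V_\Lambda$, which reduces the claim to identifying the fixed-point subalgebra $V_\Lambda^{orb(g)})^{\tilde g} \cong V_\Lambda^{\hat g}$ and matching the simple-current extension data. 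Carefully tracking which twisted sectors contribute in degree $1$ for the seven listed classes (i.e. verifying $\varepsilon(s)\notin \tfrac1m\Z$ forces no degree-$1$ states) is a finite but delicate bookkeeping task that I would organize class-by-class using $m\in\{6,10,14,26,15,21,39\}$ and $t$ even.
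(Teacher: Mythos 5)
There is a genuine error in your computation of the weight-one space, and it undermines the whole structure of your argument. For the seven classes in question, $g$ is completely fixed point free of order $n\in\{6,10,14,26,15,21,39\}$, and for every $s$ with $(s,n)=1$ the $\hat g^s$-twisted module has conformal weight $\varepsilon(s)=1-\tfrac1n$ with weights lying in $\bigl(1-\tfrac1n\bigr)+\tfrac1n\Z_{\ge 0}$. Hence weight $1$ \emph{does} occur in these twisted sectors (at level $\tfrac1n$ above the ground states, via the twisted Heisenberg modes), and the degree-one contribution of each such sector to the orbifold is $\dim\mathfrak h_{(s;g)}=24/\varphi(n)$, not $0$. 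Your assertion that $1\notin\varepsilon(s)+\tfrac1m\Z_{\ge 0}$ for all $s$, and hence that $(V_\Lambda^{orb(g)})_1=0$, is false; only the sectors with $(s,n)\neq 1$ have conformal weight $>1$ and drop out. Note also that your claim is internally inconsistent with the statement you are proving: if $(V_\Lambda^{orb(g)})_1$ were $0$, the orbifold could not be isomorphic to $V_\Lambda$ (which has $\dim (V_\Lambda)_1=24$), so no amount of "ruling out $V^\natural$" by an inverse-orbifold or character comparison can rescue the argument from that premise.

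The correct route — and the one the paper takes — is the opposite of yours: one computes
\[
\dim\bigl(V_\Lambda^{orb(g)}\bigr)_1=\sum_{(s,n)=1}\dim\mathfrak h_{(s;g)}=24,
\]
since the untwisted fixed part contributes nothing ($g$ is fixed point free) and the sectors with $(s,n)\neq 1$ have conformal weight $>1$. A holomorphic VOA of central charge $24$ with $24$-dimensional weight-one space must have abelian $V_1$ and is then isomorphic to $V_\Lambda$ (Dong--Mason), which gives the theorem. Your framing via the holomorphic $c=24$ classification is not wrong in spirit, but the case you land in is the $\dim V_1=24$ case, not the $V_1=0$ case, and the class-by-class bookkeeping you propose would, if done correctly, produce exactly the $24$ rather than $0$.
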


\begin{proof}
By a direct calculation, it is easy to show that the conformal weight of the twisted module $V_{\Lambda}^T(\hat{g}^{s})$ is $1 -1/|g|$  for any $s$ with $(s, |g|)=1$ and is $>1$ if $(s,|g|)\neq 1$. For $s$ with $(s, |g|)=1$, $\dim V_{\Lambda}^T(\hat{g}^{s})_1= \dim \mathfrak{h}_{(s; g)}$. 
Since $g$ is completely fixed point free,  
$\dim(V_\Lambda^{orb(g)})_1 =\sum_{(s,|g|)=1} \dim \mathfrak{h}_{(s; g)} =24$ and hence $V_\Lambda^{orb(g)}\cong V_\Lambda$. 
\end{proof}

\begin{corollary}
		Let $g\in O(\Lambda)$ be of the conjugacy class $-3A, -5A,-7A, -13A, 15A, 21A,$ or $39A$ and let $\hat{g}$ be a lift of $g$ in $\Aut(V_\Lambda)$.
		Then $V_\Lambda^{\hat{g}}$ has an extra automorphism. 
\end{corollary}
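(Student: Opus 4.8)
The plan is to show that the orbifold VOA $V_\Lambda^{orb(g)}$ and the orbifold subVOA $V_\Lambda^{\hat{g}}$ are related in the way that forces an extra automorphism to exist, exactly as in the prime-order case treated in \cite{LS21}. By Theorem \ref{isoLeech} we already know that $V_\Lambda^{orb(g)} \cong V_\Lambda$, so the orbifold construction, which glues together $V_\Lambda^{\hat{g}}$ together with appropriate submodules of the $\hat{g}^s$-twisted modules $V_\Lambda^T(\hat{g}^s)$ for $0 \le s \le |g|-1$, produces a VOA isomorphic to $V_\Lambda$ itself. In particular $V_\Lambda^{\hat{g}}$ sits inside (a VOA isomorphic to) $V_\Lambda$ as the fixed-point subalgebra of a (possibly different) order-$|g|$ automorphism, and simultaneously sits inside $V_\Lambda$ as $V_\Lambda^{\hat{g}}$ in the original copy.

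First I would make precise which irreducible $V_\Lambda^{\hat{g}}$-modules appear as summands in the orbifold $V_\Lambda^{orb(g)}$: these are the weight-zero pieces $V_\Lambda^T(\hat{g}^s)(j_s)$ where $j_s$ is chosen so that the conformal weight lands in $\Z$, together with $V_\Lambda^{\hat{g}} = V_\Lambda(0)$. From the computation in the proof of Theorem \ref{isoLeech} the relevant twisted pieces have conformal weight $1 - 1/|g|$ when $(s,|g|)=1$ and weight $>1$ otherwise; combined with the simple-current structure of these modules (recorded in Section \ref{S:twistedtype}), the orbifold decomposition is forced and its degree-one part has dimension $24$, confirming it is $V_\Lambda$. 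Then, since $V_\Lambda \cong V_\Lambda^{orb(g)}$ is generated by its weight-one subspace together with $V_\Lambda^{\hat{g}}$, there is an order-$|g|$ automorphism $\tilde{g}$ of this VOA whose fixed points are $V_\Lambda^{\hat{g}}$; transporting back through the isomorphism $V_\Lambda \cong V_\Lambda^{orb(g)}$ gives an automorphism $\sigma$ of $V_\Lambda$ that does not normalize $\langle \hat{g}\rangle$ (it moves the untwisted sector $V_\Lambda(1)$ to a twisted piece $V_\Lambda^T(\hat{g}^s)(j)$), hence restricts to an extra automorphism of $V_\Lambda^{\hat{g}}$ in the sense of Section \ref{S:1}.

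The cleanest way to phrase this is to invoke the analogue of \cite[Theorem 5.3]{LS21} and \cite[Proposition 1.8]{Sh04}: the isomorphism $V_\Lambda^{orb(g)} \cong V_\Lambda$ yields, on the common subVOA $V_\Lambda^{\hat{g}}$, an automorphism interchanging the simple-current $V_\Lambda^{\hat{g}}$-modules $V_\Lambda(r)$ with $V_\Lambda^T(\hat{g}^s)(j)$; because this map does not preserve the set $\{V_\Lambda(r)\circ\sigma \mid 0 \le r \le |g|-1\}$, by \cite[Theorem 2.1]{Sh07} (as used at the start of Section \ref{S:twistedtype}) it is not induced from $N_{\aut(V_\Lambda)}(\langle\hat{g}\rangle)$, so by definition it is an extra automorphism. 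I would spell out that $\Lambda_2 = \emptyset$ so Remark \ref{L2=0} applies and the normalizer description of Theorem \ref{normalizer} is available, which is what makes "not induced from the normalizer" equivalent to "permutes the sectors nontrivially."

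The main obstacle I anticipate is verifying rigorously that the orbifold construction actually closes up into a VOA isomorphic to $V_\Lambda$ rather than merely having the right graded dimension — that is, confirming that the fusion rules and associativity (the existence of a suitable intertwining-operator algebra structure on $\bigoplus_s V_\Lambda^T(\hat{g}^s)(j_s)$) are exactly those of the cyclic orbifold. This is standard by now (it follows from the general theory of cyclic orbifolds of holomorphic VOAs, and for these specific Leech conjugacy classes the orbifold $V_\Lambda^{orb(g)}$ has been identified in the literature), so I would cite the relevant orbifold-construction results rather than reprove them; the residual work is just bookkeeping to match the weight-one dimension count $\sum_{(s,|g|)=1}\dim\mathfrak{h}_{(s;g)} = 24$ already done in Theorem \ref{isoLeech}, and to check that the map $\sigma$ genuinely sends $V_\Lambda(1)$ outside the untwisted sectors.
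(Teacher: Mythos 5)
Your overall route---deduce the corollary from Theorem \ref{isoLeech} by playing the untwisted decomposition of $V_\Lambda$ over $V_\Lambda^{\hat{g}}$ against the twisted-sector decomposition of $V_\Lambda^{orb(g)}$ and then invoking \cite[Theorem 2.1]{Sh07}---is the route the paper intends (the paper states the corollary as an immediate consequence of Theorem \ref{isoLeech}). But there is a genuine gap at your central step. An abstract VOA isomorphism $\Psi\colon V_\Lambda^{orb(g)}\to V_\Lambda$ does not ``yield, on the common subVOA $V_\Lambda^{\hat{g}}$, an automorphism'': $\Psi$ has no reason to carry the copy of $V_\Lambda^{\hat{g}}$ sitting inside the orbifold onto the copy sitting inside $V_\Lambda$. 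All it gives is that $\Psi(V_\Lambda^{\hat{g}})$ is the fixed-point subVOA of the transported dual automorphism $k=\Psi\tilde{g}\Psi^{-1}\in\Aut(V_\Lambda)$, a priori a different subVOA that is merely abstractly isomorphic to $V_\Lambda^{\hat{g}}$. Using only the abstract isomorphism you get nothing: composing $\Psi|_{V_\Lambda^{\hat{g}}}$ with an arbitrary identification of $\Psi(V_\Lambda^{\hat{g}})$ with $V_\Lambda^{\hat{g}}$ could just return the identity. What must be verified is that $\Psi$ can be adjusted by an element of $\Aut(V_\Lambda)$ so that $\Psi(V_\Lambda^{\hat{g}})=V_\Lambda^{\hat{g}}$, i.e.\ that $V_\Lambda^{k}$ and $V_\Lambda^{\hat{g}}$ are conjugate under $\Aut(V_\Lambda)$. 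This is where the real work lies: by Remark \ref{L2=0} one writes $k$ as a torus element times a lift of some $\phi\in O(\Lambda)$; since $k$ acts on $(V_\Lambda)_1$ with the same characteristic polynomial $\Phi_n(x)^{24/\varphi(n)}$ as $g$, the isometry $\phi$ is fixed point free, the torus part can be removed by conjugation because $1-\phi$ is invertible, and one then has to check that for the classes $-3A,-5A,-7A,-13A,15A,21A,39A$ the $Co_0$-class of $\phi$ is pinned down by this characteristic polynomial, so that $\langle k\rangle$ is conjugate to $\langle\hat{g}\rangle$ (this is exactly the kind of verification carried out in \cite{LS20,BLS}). Only after this adjustment does $\sigma=\Psi|_{V_\Lambda^{\hat{g}}}$ become an automorphism of $V_\Lambda^{\hat{g}}$ with $V_\Lambda(1)\circ\sigma$ of twisted type, hence an extra automorphism by \cite[Theorem 2.1]{Sh07}. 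By contrast, the obstacle you flag (that the orbifold closes up and is isomorphic to $V_\Lambda$) is not the issue: that is precisely Theorem \ref{isoLeech}, already proved in the paper.

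Separately, your first formulation---``transporting back through the isomorphism gives an automorphism $\sigma$ of $V_\Lambda$ that does not normalize $\langle\hat{g}\rangle$ \dots hence restricts to an extra automorphism of $V_\Lambda^{\hat{g}}$''---cannot be correct as stated. Any $a\in\Aut(V_\Lambda)$ with $a(V_\Lambda^{\hat{g}})=V_\Lambda^{\hat{g}}$ automatically normalizes $\langle\hat{g}\rangle$, because $\langle\hat{g}\rangle$ is exactly the subgroup of $\Aut(V_\Lambda)$ acting trivially on $V_\Lambda^{\hat{g}}$ (it acts by the $\Z_n$-characters on the simple-current decomposition $\bigoplus_r V_\Lambda(r)$), and its restriction then permutes the set $\{V_\Lambda(r)\}$, so it is induced from $N_{\Aut(V_\Lambda)}(\langle\hat{g}\rangle)$ and is not extra. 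Thus an extra automorphism is never the restriction of an automorphism of $V_\Lambda$; the viable mechanism is the one in your second paragraph, and it stands or falls with the conjugacy statement above.
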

\medskip

Now suppose $m=p^r$ is a prime power. Then 
\[
\ell =\frac{24 (p^r-1) p^r}{p^{2r}+p} =\frac{24 (p^r-1) p^{r-1}}{p^{2r-1}+1}, 
\]
which is an integer and is strictly less than $24$. Notice that  $(p^{2r-1}+1, p^{r-1})=1$ and  
$(p^{2k-1}+1)\gneq (p^k-1)$. Therefore, $(p^{2r-1}+1) |  24 (p^r-1)$.

 If $p=2$, then both $(p^{2r-1}+1)$ and $(p^r-1)$ are odd. Then we have $(2^{2r-1}+1)=3(2^r-1)$ and we have $r=1$ or $2$, i.e., $m=2$ or $4$.  The case $m=2$  has been discussed in \cite{Sh04}  while the case $m=4$ has been studied in \cite{CL21}.  It turns out that the case $m=4$ is not possible, either. 
 
Now assume that $p$ is odd.  Since $\ell \geq \varphi(m)=p^{r-1}(p-1)$, we have $p^{r-1}(p-1)<24$.
Therefore, $m<24$ is a prime or $m=3^2, 3^3$ or $5^2$. 
By direct calculations, it is easy to verify that $(p^{2r-1}+1)$ does not divide $24 (p^r-1)$ if $m=p^r=3^2$, $3^3$ and $5^2$.  
Hence, $m<24$ is a prime. These cases have been studied in \cite{LS21}. It turns out that $L$ is isometric to a coinvariant lattice $\Lambda_g$ of the Leech lattice associated with an element of class $2A,3B,5B,7B,11A,$ or $23A$ ($23B$).  
 
\textbf{Case 2:  $\varepsilon(s)=1$.}  In this case, 
\[
\dim (V_{\mu+L}^T[\hat{g}^s](j))_1 = 
\frac{\dim T_{\tilde{\mu}}}{d} = \frac{\ell}{\varphi(n)}.
\]
By \eqref{phis}, we also have 
\[
\ell = 24 \frac{m^2}{m^2+ (-1)^{t+1} \prod q_i}= \frac{24A}{A+ (-1)^{t+1}},  
\]
where $A=\frac{m^2}{\prod q_i}$. Since $(A, A+ (-1)^{t+1})=1$, $A+(-1)^{t+1}| 24$.  
By direct calculations, it is straightforward to verify that $m$ must be a prime and $m=2,3,5,7, 11$ or $23$.
Moreover, $\ell =16$ (resp., $18, 20, 21, 22$, $23$) if  $m= 2$ (resp., $3,5,7, 11,$ or $23$). 
Again these cases have been discussed in \cite{LS21}.  It turns out that only the cases $m=2,3,5$ are possible and $L$ is isometric to a coinvariant lattice of the Leech lattice associated with an element of class $-2A, 3C$, or $5C$. 

\section{Coinvariant lattices of the Leech lattice}

From discussions in the last section, we know that if $g$ is completely fixed point free  and $V_L^{\hat{g}}$ contains an extra automorphism $\sigma$ such that $V_L(1)\circ \sigma$ is isomorphic  to an irreducible module of twisted type, then  $L$ is  isometric to either
\begin{enumerate}
	\item a coinvariant lattice $\Lambda_h$ of the Leech lattice with $h\in 2A, -2A, 3B, 3C,5B, 5C$, $7B$, $11A$ or $23A\,(23B)$; or
	
	\item  $L\cong \Lambda$ and 
	$g^s\in -3A, -5A, -7A, -13A$, $15A$,  $21A$, or $39A$ for some $1\leq s\leq |g|-1$. 
\end{enumerate}
For all these cases, the irreducible $h$ (resp., $g^s$)-twisted modules have conformal weight in $\frac{1}m \Z$, where  $m=|h|$ (resp., $m=|g^s|$). 

\medskip

In this section, we discuss  $\Aut(V_L^g)$ for  some possible $g$.

\subsection{Coinvariant lattices associated with prime order elements}
We first consider the coinvariant lattices  of the Leech lattice associated with prime order elements.

\begin{lemma}\label{p1}
	Let $L=\Lambda_h$ with  $h\in 2A, -2A, 3B, 3C,5B, 5C$, $7B$, $11A$ or $23A\,(23B)$. 
	Let $g\in O(L)$ be a completely fixed point free isometry such that  $g^n=h$ on $L$. 
	Suppose there is a $\sigma\in V_{L}^{\hat{g}}$ such that $V_L(1)\circ \sigma\cong V_{\lambda+L}^T[\hat{g}^n](j)$ for some $0\leq j\leq |g|-1$.  
	Then $\sigma$ stabilizes $V_L^{\hat{h}}= \oplus_{i=0}^{m-1} V_L(pi) $ by the conjugate action, where $p=|h|$. 
\end{lemma}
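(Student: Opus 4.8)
The plan is to work inside the braided tensor category of $V_L^{\hat g}$-modules and to follow what the conjugate action of $\sigma$ does to the cyclic group of simple currents generated by $V_L(1)$. Write $U=V_L^{\hat g}$, $p=|h|$ and $N=|g|$, so that $m=N/p$ and $V_L^{\hat h}=\bigoplus_{i=0}^{m-1}V_L(pi)$ is the decomposition in the statement. Because $g$ is completely fixed point free, $V_L=\bigoplus_{r=0}^{N-1}V_L(r)$ is a $\Z/N$-graded simple current extension of $U$ in which every $V_L(r)$ is irreducible; hence $V_L(r)\cong V_L(1)^{\boxtimes r}$, the subgroup $\langle V_L(1)\rangle$ of the simple current group $\mathcal G$ of $U$ is cyclic of order $N$, and $V_L^{\hat h}$ corresponds to its unique subgroup of order $m$, namely $\langle V_L(p)\rangle=\langle V_L(1)^{\boxtimes p}\rangle$. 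The conjugate action $M\mapsto M\circ\sigma$ is a braided tensor auto-equivalence of the category of $U$-modules, so it preserves simple currents and induces a group automorphism $\Psi$ of $\mathcal G$ with $\Psi(V_L(1))=V_L(1)\circ\sigma\cong V_{\lambda+L}^T[\hat g^{n}](j)$. Thus $\Psi(V_L(p))=\left(V_{\lambda+L}^T[\hat g^{n}](j)\right)^{\boxtimes p}$, and it suffices to prove that this $p$-th fusion power is isomorphic to $V_L(pk)$ for some $0\le k\le m-1$: then $\Psi(\langle V_L(p)\rangle)=\langle V_L(pk)\rangle\subseteq\langle V_L(p)\rangle$, and comparing orders (both equal to $m$, since $\Psi$ is injective) forces equality, which is precisely the assertion that $\sigma$ permutes $\{V_L(pi):0\le i\le m-1\}$ and hence stabilizes $V_L^{\hat h}$ under the conjugate action.

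To evaluate this $p$-th fusion power I would pass to the intermediate orbifold $W:=V_L^{\hat h}$. After adjusting the lift of $g$ so that $\hat h=\hat g^{n}$ (cf.~\cite[Lemma 4.5]{LS20}), $W$ is at once a cyclic simple current extension of $U$ by $\langle V_L(p)\rangle\cong\Z/m$ and the $\Z/p$-orbifold of $V_L$ by the fixed-point-free isometry $\hat h$ of prime order $p$. The $U$-module $V_{\lambda+L}^T[\hat g^{n}](j)=V_{\lambda+L}^T[\hat h](j)$ is an irreducible $U$-submodule of the restriction to $U$ of the $\hat h$-twisted $V_L$-module $V_{\lambda+L}^T[\hat h]=M(1)[h]\otimes T_{\tilde\lambda}$; there is no $\C[P_0^{h}(\lambda+L)]$ factor because $h$ is fixed point free, so $L^{h}=\{0\}$ and $P_0^{h}=0$. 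Since $V_L(1)$, hence $V_{\lambda+L}^T[\hat h](j)$, is a simple current of $U$, a quantum-dimension count over the extension $U\subset W$ shows that $V_{\lambda+L}^T[\hat h]$ is an irreducible simple current $W$-module, lying in the $\hat h$-twisted sector of the $\Z/p$-grading of the $W$-modules. Moreover, because $h$ has prime order $p$ and is fixed point free, $1+h+\cdots+h^{p-1}=\Phi_p(h)=0$ on $L^{*}$, so the ``charge'' carried by $p$ fused $\hat h$-twisted sectors lies in $\left(1+h+\cdots+h^{p-1}\right)L^{*}=\{0\}$; concretely,
\[
\left(V_{\lambda+L}^T[\hat h]\right)^{\boxtimes_{W}p}\cong W.
\]

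The final step would then be formal: by the compatibility of fusion products with the simple current extension $U\subset W$ (cf.~\cite{Lam19}), the $p$-fold $U$-fusion power of $V_{\lambda+L}^T[\hat h](j)$, which is an irreducible $U$-submodule of $\res^{W}_U V_{\lambda+L}^T[\hat h]$, occurs as an irreducible $U$-submodule of $\res^{W}_U\left(\left(V_{\lambda+L}^T[\hat h]\right)^{\boxtimes_{W}p}\right)$, and by the previous paragraph the latter is $\res^{W}_U W=\bigoplus_{i=0}^{m-1}V_L(pi)$. Hence $\left(V_{\lambda+L}^T[\hat h](j)\right)^{\boxtimes p}\cong V_L(pk)$ for some $0\le k\le m-1$, which is exactly what was needed.

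The main obstacle is the middle step, i.e.\ showing $\left(V_{\lambda+L}^T[\hat h]\right)^{\boxtimes_{W}p}\cong W$ rather than $V_{\nu+L}^{\hat h}$ for some nontrivial $\nu+L\in(L^{*}/L)^{h}$. This is where the explicit description of the $\hat h$-twisted modules from \cite{Le,DL}, the vanishing $L^{h}=\{0\}$ (so that $V_{\lambda+L}^T[\hat h]$ has no lattice-coset factor), and the identity $\Phi_p(h)=0$ (which annihilates the charge surviving after $p$ fusions) have to be combined carefully; for the lattices listed in the hypothesis this can alternatively be extracted from the known fusion-rule structure of $\Z/p$-orbifolds of lattice VOAs by fixed-point-free prime-order isometries (cf.~\cite{ALY,LS21}). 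Everything else is a formal manipulation in the module category together with facts already recorded in the paper.
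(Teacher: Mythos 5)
Your overall skeleton is the same as the paper's: conjugation by $\sigma$ preserves fusion products, so the whole lemma reduces to showing that the $p$-th fusion power of $V_L(1)\circ\sigma\cong V_{\lambda+L}^T[\hat g^{n}](j)$ over $V_L^{\hat g}$ is some $V_L(pk)$, after which $\{V_L(pi)\mid 0\le i\le m-1\}$ is permuted and $V_L^{\hat h}$ is stabilized. The genuine gap is exactly the step you flag as the ``main obstacle,'' and it is not a routine verification that can be supplied by your charge argument. Whether a twisted-sector simple current of $W=V_L^{\hat h}$ has order $p$ or $p^2$ under fusion is \emph{not} governed by the lattice ``charge'' $(1+h+\cdots+h^{p-1})\lambda$ alone; it is governed by the conformal weight of the twisted module modulo $\tfrac1p\Z$ (the quadratic form on the group of simple currents). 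Already for $p=2$, $h=-1$, where $\Phi_2(h)=1+h=0$ identically, there are rootless even lattices for which $\bigl(V_L^{T_\chi,\pm}\bigr)^{\boxtimes 2}\cong V_{\nu+L}^{\pm}$ with $\nu+L$ a nontrivial coset, i.e.\ the twisted simple current has order $4$; your argument would force order $2$ for every such $L$. Symptomatically, your proposal never uses the hypothesis that $L$ is one of the listed coinvariant lattices $\Lambda_h$. The paper closes precisely this point by invoking \cite[Theorem 5.3]{Lam19}: for these $\Lambda_h$ the discriminant group $L^*/L$ is an elementary abelian $p$-group \emph{and} the irreducible $\hat h$-twisted modules have conformal weights in $\tfrac1p\Z$, and these two facts together make $\mathrm{Irr}(V_L^{\hat h})$ an elementary abelian $p$-group under fusion, so the $p$-th power of the relevant irreducible $V_L^{\hat h}$-module is $V_L^{\hat h}$ itself; restricting to $V_L^{\hat g}$-modules then gives $V_L(pk)$.

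A secondary slip: $V_{\lambda+L}^T[\hat h]$ is not an irreducible simple current $W$-module. Since every irreducible $\hat h$-twisted module is $\hat h$-stable, it decomposes into $p$ irreducible $W$-modules (the $\hat h$-eigenspaces), each a simple current; your quantum-dimension count gives $\qdim_W\bigl(V_{\lambda+L}^T[\hat h]\bigr)=p$, not $1$. This part is repairable by working with a single eigenspace, as the paper does, but the order-$p$ statement discussed above is the essential missing ingredient and must be taken from the group-like fusion structure specific to the hypotheses of the lemma.
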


\begin{proof}
For $L\cong \Lambda_h$,  $\mathcal{D}(L) =L^*/L$ is an elementary $p$-group, where $p=|h|$. 
Since the conformal weights of the irreducible $h$-twisted modules are in $\frac{1}p \Z$, 
$\mathrm{Irr}(V_L^{\hat{h}})$ also forms an an elementary $p$-group with respect to the fusion product by \cite[Theorem 5.3]{Lam19}. We have $(V_{\lambda+L}^T[\hat{g}^n]_\Z)^{ \boxtimes p}  = V_L^{\hat{h}}$ as  $V_L^{\hat{h}}$-modules. Therefore, $V_{\lambda+L}^T[\hat{g}^n](j)^{\boxtimes p} < V_L^{\hat{h}}$ and   $$V_{\lambda+L}^T[\hat{g}^n](j)^{\boxtimes p}\cong V_L(pi)$$  for some $i$ as $V_L(p)$ has the top weight $>1$. In particular, $V_L^{\hat{h}}= \oplus_{i=0}^{m-1} V_L(pi) $ is stabilized by $\sigma$. 
\end{proof}

As a consequence, we have 
\begin{proposition}\label{primecases}
	Let $L=\Lambda_h$ with  $h\in 2A, -2A, 3B, 3C,5B, 5C$, $7B$, $11A$ or $23A\,(23B)$. 
	Let $g\in O(L)$ be a completely fixed point free isometry such that  $g^n=h$ on $L$.
	Then, $$\Aut(V_L^{\hat{g}})\cong N_{\Aut(V_L^{\hat{h}})}(\langle \bar{g} \rangle)/\langle \bar{g} \rangle,$$ 
	where $\bar{g}$ denotes the restriction of $\hat{g}$ on $V_L^{\hat{h}}$. 
\end{proposition}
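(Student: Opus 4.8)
The plan is to combine the preceding lemma with the general structure theory of orbifold automorphism groups from Section~2. First I would recall that by Theorem~\ref{normalizer} (applied to the fixed point free isometry $h$ of the rootless even lattice $L=\Lambda_h$) the group $N_{\aut(V_L)}(\langle\hat h\rangle)/\langle\hat h\rangle$ maps onto $\Aut(V_L^{\hat h})$ — more precisely, the map $f$ of Section~2 is surjective in the prime-order case treated in \cite{LS21}, so that $\Aut(V_L^{\hat h})$ is known and contains the image of the normalizer. The key point, supplied by Lemma~\ref{p1}, is that \emph{any} extra automorphism $\sigma$ of $V_L^{\hat g}$ with $V_L(1)\circ\sigma$ of twisted type stabilizes the intermediate orbifold $V_L^{\hat h}=\bigoplus_{i=0}^{m-1}V_L(pi)$, where $p=|h|$ and $m=n/p$ with $n=|g|$. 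So $\sigma$ restricts to an automorphism of $V_L^{\hat h}$; call it $\bar\sigma$. By the argument of Case~(I) (the reasoning around Lemma~\ref{fusion1}) and the classification of simple currents, if $V_L(1)\circ\sigma$ were of untwisted type we would already be forced into the prime-order situation, which is excluded here, so twisted type is the only case that needs this proposition.

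Next I would argue that the assignment $\sigma\mapsto\bar\sigma$ identifies $\Aut(V_L^{\hat g})$ with a subgroup of the normalizer of $\langle\bar g\rangle$ in $\Aut(V_L^{\hat h})$, where $\bar g$ is the automorphism of $V_L^{\hat h}$ induced by $\hat g$ (note $\hat g$ normalizes $\langle\hat h\rangle=\langle\hat g^m\rangle$, so it does act on $V_L^{\hat h}$, and $V_L^{\hat g}=(V_L^{\hat h})^{\bar g}$). Since $\sigma$ commutes with $\hat g$ acting on $V_L^{\hat g}$ and preserves $V_L^{\hat h}$, its restriction $\bar\sigma$ must normalize $\langle\bar g\rangle$ inside $\Aut(V_L^{\hat h})$; the kernel of the restriction map is exactly $\langle\bar g\rangle$ (an automorphism of $V_L^{\hat g}$ that is trivial on $V_L^{\hat g}$ and acts as a power of $\bar g$ on the rest must be a power of $\hat g|_{V_L^{\hat h}}$, i.e.\ lies in $\langle\bar g\rangle$, because $V_L^{\hat h}$ decomposes as $\bar g$-eigenspaces which are the simple current modules $V_L(pi)$, each appearing with multiplicity one). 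This gives an injection $\Aut(V_L^{\hat g})\hookrightarrow N_{\Aut(V_L^{\hat h})}(\langle\bar g\rangle)/\langle\bar g\rangle$. For the reverse inclusion I would run the standard argument: any element of $N_{\Aut(V_L^{\hat h})}(\langle\bar g\rangle)$ preserves the $\bar g$-fixed subVOA $V_L^{\hat g}$, hence induces an automorphism of $V_L^{\hat g}$, giving a homomorphism in the other direction whose composition with the injection is the identity; surjectivity follows because $\bar g$ acts faithfully on $V_L^{\hat h}$ with $V_L^{\hat g}$ as its fixed points, so an automorphism of $V_L^{\hat g}$ together with the requirement of normalizing $\langle\bar g\rangle$ determines the extension to $V_L^{\hat h}$ uniquely up to $\langle\bar g\rangle$.

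The main obstacle I anticipate is making the last surjectivity/well-definedness step rigorous, namely that the restriction map $N_{\aut(V_L^{\hat h})}(\langle\bar g\rangle)\to\Aut(V_L^{\hat g})$ is onto and has kernel exactly $\langle\bar g\rangle$. Onto-ness requires that \emph{every} automorphism of $V_L^{\hat g}$ — including a putative further extra automorphism — extends to $V_L^{\hat h}$; this is precisely what Lemma~\ref{p1} buys us for automorphisms moving $V_L(1)$ to a twisted-type module, but one must also dispose of automorphisms fixing the set $\{V_L(r)\mid 0\le r\le n-1\}$ (these come from $N_{\aut(V_L)}(\langle\hat g\rangle)$ hence automatically preserve $V_L^{\hat h}$) and automorphisms moving $V_L(1)$ to an untwisted-type simple current (handled by Case~(I), impossible here). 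Collecting these cases: every automorphism of $V_L^{\hat g}$ preserves $V_L^{\hat h}$, so restriction is defined on all of $\Aut(V_L^{\hat g})$, and its image lies in $N_{\Aut(V_L^{\hat h})}(\langle\bar g\rangle)$ because it commutes with $\hat g$; combined with the two inclusions above this yields the claimed isomorphism. I would write this up by first recording the three-way case split for an arbitrary automorphism of $V_L^{\hat g}$, then invoking Lemma~\ref{p1} and Case~(I), and finally assembling the isomorphism from the mutually inverse restriction and induction maps.
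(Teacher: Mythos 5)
Your proposal is correct and takes essentially the same route as the paper: the same case split (automorphisms stabilizing $\{V_L(i)\}$, untwisted extra automorphisms ruled out by the Case~(I) analysis since $|g|$ is not prime, twisted ones handled by Lemma~\ref{p1}), followed by the identification $\Aut(V_L^{\hat{g}})\cong N_{\Aut(V_L^{\hat{h}})}(\langle\bar{g}\rangle)/\langle\bar{g}\rangle$ via mutually inverse maps with kernel $\langle\bar{g}\rangle$, a step the paper leaves implicit and you spell out. The only slip is verbal: an automorphism of $V_L^{\hat{g}}$ \emph{extends} (lifts, via the simple-current-extension lifting results of \cite{Sh07}) to $V_L^{\hat{h}}$ rather than ``restricts'' to it, since $V_L^{\hat{g}}\subset V_L^{\hat{h}}$.
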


\begin{proof}
	For any $\sigma \in \Aut(V_L^{\hat{g}})$, either $\sigma$ stabilizes the set 
	$\{V_L(i)|0\leq i\leq |g|\}$ or  $V_L(1)\circ \sigma\cong V_{\lambda+L}^T[\hat{g}^n](j)$ for some $0\leq j\leq |g|-1$. In both cases,  $\sigma$ stabilizes  $V_L^{\hat{h}}= \oplus_{i=0}^{m-1} V_L(pi) $ by Lemma \ref{p1} and we have the desired result. 
\end{proof}

\subsection{Elements of non-prime orders}
Next we consider the case when 
$L\cong \Lambda$ and 
$g^s\in -3A, -5A, -7A, -13A$, $15A$,  $21A$, or $39A$ for some $1\leq s\leq |g|-1$. 
By the same arguments as in Lemma \ref{p1} and Proposition \ref{primecases}, we have the following result.

\begin{proposition}
Let $g\in O(\Lambda)$ such that $h=g^s\in  -3A, -5A, -7A, -13A$, $15A$,  $21A$, or $39A$.
Then  $\Aut(V_\Lambda^{\hat{g}})\cong N_{\Aut(V_\Lambda^{\hat{h}})}(\langle \bar{g}\rangle)/ \langle \bar{g}\rangle$, where $\bar{g}$ denotes the restriction of $\hat{g}$ on $V_{\Lambda}^{\hat{h}}$. 
\end{proposition}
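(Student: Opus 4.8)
The plan is to mimic the argument that establishes Lemma~\ref{p1} and Proposition~\ref{primecases}, the only point of difference being that $h=g^s$ now has composite order and $\Lambda$ is unimodular rather than a coinvariant lattice with elementary $p$-group discriminant. First I would record the structural input already obtained in Section~4: for each of the classes $-3A,-5A,-7A,-13A,15A,21A,39A$, the irreducible $\hat h$-twisted module $V_\Lambda^T(\hat h^{s'})$ has conformal weight $1-1/m$ when $(s',m)=1$ (where $m=|h|$) and conformal weight $>1$ otherwise; this is exactly what was used in the proof of Theorem~\ref{isoLeech}. In particular $V_\Lambda^{orb(h)}\cong V_\Lambda$, so all irreducible $V_\Lambda^{\hat h}$-modules are simple currents, and by \cite[Theorem 5.3]{Lam19} $\mathrm{Irr}(V_\Lambda^{\hat h})$ is a finite abelian group under the fusion product whose order equals $|\mathcal D(\Lambda^h)|\cdot m^2 / (\text{something})$ — but more to the point its structure is controlled by the cyclic group generated by $V_\Lambda(1;\hat h)$ together with the twisted sectors, and the top weights pin down the exponent.

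The key step, as in Lemma~\ref{p1}, is to show that any $\sigma\in\Aut(V_\Lambda^{\hat g})$ stabilizes the subalgebra $V_\Lambda^{\hat h}=\bigoplus_{i=0}^{n/|h|-1}V_\Lambda(|h|\, i;\hat g)$ under conjugation, where I write $n=|g|$ and $p$ should be replaced throughout Lemma~\ref{p1}'s argument by $m=|h|$. If $\sigma$ stabilizes the set $\{V_\Lambda(i;\hat g)\mid 0\le i\le n-1\}$ this is immediate, since $V_\Lambda^{\hat h}$ is the sum of those $V_\Lambda(i;\hat g)$ with $m\mid i$, and conjugation by $\sigma$ permutes the simple currents $V_\Lambda(i;\hat g)$ in a way compatible with the cyclic fusion structure, hence preserves the unique subgroup of index $m$. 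Otherwise $V_\Lambda(1;\hat g)\circ\sigma\cong V_{\mu+\Lambda}^T[\hat g^s](j)$ for the relevant $s$ with $g^s\in h$'s class, and then arguing exactly as in Lemma~\ref{p1}: the fusion powers of $V_{\mu+\Lambda}^T[\hat g^s](j)$ land inside $V_\Lambda^{\hat h}$ because that module is an irreducible summand of the $\hat h$-twisted sector $V_\Lambda^T(\hat h)\in\mathrm{Irr}(V_\Lambda^{\hat h})$, and $V_{\mu+\Lambda}^T[\hat g^s](j)^{\boxtimes(\text{order})}\cong V_\Lambda(mi;\hat g)$ for some $i$ — here one uses that $V_\Lambda(m;\hat g)$ has top weight $>1$ (since its weight-one space is $\dim\mathfrak h_{(m;g)}=0$ by complete fixed point freeness, as $(m,n)\ne 1$ unless $m=n$; the case $m=n$ being exactly when $\sigma$ already stabilizes the untwisted cyclic family, handled above). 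Thus $\sigma(V_\Lambda^{\hat h})=V_\Lambda^{\hat h}$ in all cases.

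Granting that, the conclusion is formal: restriction gives a homomorphism $\Aut(V_\Lambda^{\hat g})\to\Aut(V_\Lambda^{\hat h})$, and since $\hat g$ normalizes $V_\Lambda^{\hat h}$ (as $\hat g^m=\hat h$ acts as a scalar $m$-th root of unity on each $V_\Lambda(mi;\hat g)$, so $\hat g$ restricts to an automorphism $\bar g$ of $V_\Lambda^{\hat h}$ with $\langle\bar g\rangle$ of order $m/\gcd(\dots)$ — precisely the image of $\hat g$), the image lands in $N_{\Aut(V_\Lambda^{\hat h})}(\langle\bar g\rangle)$. The kernel of the restriction map is trivial because $V_\Lambda^{\hat g}\subset V_\Lambda^{\hat h}$ and an automorphism of $V_\Lambda^{\hat g}$ is determined by its action on this subVOA together with the conjugation action on the $V_\Lambda^{\hat h}$-module decomposition of $V_\Lambda^{\hat g}$, the latter being recoverable from the former; and $\bar g$ itself (equivalently $\hat g$ acting on $V_\Lambda^{\hat g}$, which is trivial) accounts for the $\langle\bar g\rangle$ quotient. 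Conversely any element of $N_{\Aut(V_\Lambda^{\hat h})}(\langle\bar g\rangle)$ permutes the $\bar g$-eigenspaces inside $V_\Lambda^{\hat h}$ and hence restricts to an automorphism of the $\bar g$-fixed points, which is $V_\Lambda^{\hat g}$; this gives the surjection onto $N_{\Aut(V_\Lambda^{\hat h})}(\langle\bar g\rangle)/\langle\bar g\rangle$ and completes the isomorphism.

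\textbf{Main obstacle.} I expect the delicate point to be the stabilization step when $\sigma$ does \emph{not} preserve the untwisted cyclic family: one must be sure that $V_{\mu+\Lambda}^T[\hat g^s](j)$ really is a $V_\Lambda^{\hat h}$-submodule of a single $\hat h$-twisted irreducible and that its fusion orbit closes up inside $V_\Lambda^{\hat h}$ rather than leaking into $\hat g^{s'}$-twisted sectors with $s'\not\equiv 0\pmod m$. This should follow from the compatibility of the $\hat g$-grading with the $\hat h$-grading ($V_\Lambda(i;\hat g)\subset V_\Lambda(i\bmod m;\hat h)$-type refinement on each sector) together with the top-weight inequality that kills all the ``wrong'' untwisted pieces $V_\Lambda(mi;\hat g)$ with $mi\not\equiv 0$, $i\ne 0$; but one has to check that for every one of the seven Leech classes the relevant twisted conformal weights are genuinely $>1$ in the non-coprime cases, which is a finite verification already essentially carried out in the proof of Theorem~\ref{isoLeech}. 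Apart from that, the argument is a routine transcription of the prime-order case with $p$ replaced by $m=|h|$.
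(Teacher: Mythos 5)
Your proposal takes the paper's own route: the paper proves this proposition by a single sentence invoking ``the same arguments as in Lemma \ref{p1} and Proposition \ref{primecases}'', and that is exactly what you carry out with $p$ replaced by $m=|h|$ — the conformal-weight data from Theorem \ref{isoLeech} giving group-like fusion for $\mathrm{Irr}(V_\Lambda^{\hat{h}})$, the fusion-power/top-weight argument showing every $\sigma\in\Aut(V_\Lambda^{\hat{g}})$ stabilizes $V_\Lambda^{\hat{h}}=\oplus_i V_\Lambda(mi)$, and then the identification with $N_{\Aut(V_\Lambda^{\hat{h}})}(\langle\bar{g}\rangle)/\langle\bar{g}\rangle$. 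One wording slip worth fixing: the forward map is obtained by \emph{lifting} (extending) each $\sigma$ to the simple current extension $V_\Lambda^{\hat{h}}\supseteq V_\Lambda^{\hat{g}}$ via \cite{Sh07}, not by ``restriction'' — restriction goes the other way, from $N_{\Aut(V_\Lambda^{\hat{h}})}(\langle\bar{g}\rangle)$ to $\Aut(V_\Lambda^{\hat{g}})$ with kernel $\langle\bar{g}\rangle$ — but this does not change the substance of the argument.
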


\section{Some explicit examples} 
Next  we consider some explicit examples.

\subsection{$h=g^s\in 2A$ for some positive integer $s$}  
Let $h\in O(\Lambda)$ be of the class $2A$. Then $\Lambda_h\cong \sqrt{2}E_8$, $O(\Lambda_h) \cong \mathrm{Weyl}(E_8)\cong 2. \Omega^+_8(2).2$ and $h$ acts as $-1$ on $\Lambda_h$.  In addition, $\Aut(V_{\Lambda_h}^{\hat{h}}) \cong GO^+_{10}(2)$.

The Weyl group of $E_8$ contains $5$ conjugacy classes of completely fixed point free isometry $g$ such that $-1\in \langle g\rangle$. They correspond to  $-3B, -5B, -15B$, $2C$ (or $2D$) , $4E$ (or $4F$).    

Recall that $\dim(V_{\Lambda_h}^{\hat{h}})_2=156$ and $(V_{\Lambda_h}^{\hat{h}})_2$ decomposed as as two irreducible modules of $GO^+_{10}(2)$ with dimensions $1$ and $155$.  A lift of the corresponding isometry of $\sqrt{2}E_8$ acts on $(V_{\Lambda_h}^{\hat{h}})_2$. By direct calculations and using the character tables, the corresponding traces and conjugacy classes are listed in Table \ref{Table:2A1}.

{\tiny
\begin{longtable}{|c|c|c|c|c|c|}
	\caption{Traces of $\hat{g}$ on $(V_{\Lambda_h}^{\hat{h}})_2$}\label{Table:2A1}
	\\ \hline 
	Classes in $\mathrm{Weyl}(E_8)$ & Trace of $\hat{g}$ on $(V_{\Lambda_h}^{\hat{h}})_2$ & Classes in $GO^+_{10}(2)$ 
	& $|C_{O(\Lambda_h)}(g)|$ & $|C_{\Aut(V_L^{\hat{h}})}(\bar{g} )|$ &	
	$N_{\Aut(V_L^{\hat{h}})}(\langle \bar{g} \rangle)/ \langle \bar{g} \rangle$ 	 \\ \hline    
	$-3B$ & $6$ &  $3C$ & $77760$ &$77760$ & $PSU_4(2).2$ \\ 
	$-5B$ & $1$  & $5B$ & $300$ & $300$ & $ Sym_5$\\
	$-15B$ & $1$ &  $15E$ & $15$ &$15$ & $2$\\ 
	$\widehat{2C}, \widehat{2D}$  (order 4)& $-4$ & $2C$ & $2^{10}3^2 5$ & $2^{18}3^2 5$ & $[2^{14}]. \Sym_{6}$\\
	$\widehat{4E},\widehat{4F}$ (order 8) & $0$ & $4H$  & $2^{6} 3$ & $2^{10} 3$ & $[2^{6}].\Sym_4$ \\ \hline
\end{longtable} 
}

\begin{remark}
For the conjugacy classes $-3B$, $-5B$ and $-15B$, $\Aut(V_{\Lambda_h}^{\hat{g}})$ is isomorphic to $N_{\aut(V_{\Lambda_h})}(\langle \hat{g}\rangle)/ \langle \hat{g}\rangle$; there are no extra automorphisms for these cases. 
 
 For  the conjugacy classes  $\widehat{2C}, \widehat{2D}$ (resp., $\widehat{4E},\widehat{4F}$), $g$ has order $4$ (resp., order $8$).  For these two cases, 
  $\Aut(V_{\Lambda_h}^{\hat{g}})$ is slightly bigger than $N_{\aut(V_{\Lambda_h})}(\langle \hat{g}\rangle)/ \langle \hat{g}\rangle$. 
\end{remark}

\medskip

\subsection{Some fixed point free elements of $\Lambda$ with non-prime orders} 
Next we consider  some fixed point free elements of $\Lambda$ with non-prime orders.
Let  $g\in O(\Lambda)$ such that $g\in -3A, -5A, -7A, -13A$, $15A$,  $21A$, or $39A$.  
For these cases, consider the sets 
\[
\mathcal{V}= \{V_\Lambda(i)\mid  (|g|, i)=1\} \quad \text{ and } \quad
\mathcal{V}_t= \{V_\Lambda[\hat{g}^i](0)\mid  (|g|, i)=1\}.  
\]
Then  for any $\sigma \in \Aut(V_{\Lambda}^{\hat{g}} )$, we have $V_\Lambda(1)\circ \sigma \in 
\mathcal{V} \cup \mathcal{V}_t$.

\begin{lemma}
Let $\sigma \in \Aut(V_{\Lambda}^{\hat{g}} )$ such that $V_\Lambda(1)\circ \sigma \in 
\mathcal{V}_t$.  Then  $V_\Lambda(1)\circ \sigma^2 \in \mathcal{V}$. That means $\sigma^2\in 
\Stab_{\Aut(V_{\Lambda}^{\hat{g}} )}(\mathcal{V}) \cong N_{\aut(V_{\Lambda})}(\langle \hat{g}\rangle)/ \langle \hat{g}\rangle$. 
\end{lemma}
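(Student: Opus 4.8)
The plan is to track what the $\sigma$-conjugation does to the simple current $V_\Lambda(1)$ and show that applying $\sigma$ twice lands us back in the untwisted family $\mathcal{V}$. By hypothesis $V_\Lambda(1)\circ\sigma \in \mathcal{V}_t$, so $V_\Lambda(1)\circ\sigma \cong V_\Lambda[\hat g^{s}](0)$ for some $s$ with $(|g|,s)=1$. Since $\sigma$ is an automorphism of $V_\Lambda^{\hat g}$, conjugation by $\sigma$ is a ring isomorphism of the fusion algebra $\mathrm{Irr}(V_\Lambda^{\hat g})$; in particular it sends the order-$n$ simple current $V_\Lambda(1)$ (with $n=|g|$) to another order-$n$ element, and more precisely $V_\Lambda(k)\circ\sigma \cong (V_\Lambda(1)\circ\sigma)^{\boxtimes k} \cong V_\Lambda[\hat g^{s}](0)^{\boxtimes k}$. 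So the first step is to compute the fusion powers $V_\Lambda[\hat g^{s}](0)^{\boxtimes k}$, i.e.\ to understand the subgroup of $\mathrm{Irr}(V_\Lambda^{\hat g})$ generated by $V_\Lambda[\hat g^{s}](0)$.

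For this I would use the structure of the twisted modules recalled in Section~3: since $g$ (hence $g^s$, as $(n,s)=1$) is completely fixed point free, $V_\Lambda^T[\hat g^s] = M(1)[g^s]\otimes T_{\tilde\mu}$, and the relevant conformal weights all lie in $\frac1m\Z$ with $m=|g^s|$; for the classes in question $m=n$. The key input is that the fusion (sub)group here is controlled by the discriminant-form data exactly as in the prime-order analysis of \cite{LS21} and in Lemma~\ref{p1}: the cyclic group generated by the untwisted simple currents $\{V_\Lambda(i)\}$ together with the twisted ones forms an abelian group under $\boxtimes$, and one identifies $V_\Lambda[\hat g^s](0)^{\boxtimes 2}$ with an element of the untwisted family $\{V_\Lambda(i)\mid 0\le i\le n-1\}$ — the point being that a twisted-times-twisted fusion is untwisted (the $\hat g^s$-twist and $\hat g^{-s}$-twist, or the doubling of the twist, returns to the untwisted sector), while it cannot be $V_\Lambda(pi)$-type ``bad'' pieces of top weight $>1$ because $\dim\bigl(V_\Lambda[\hat g^s](0)\bigr)_1 = \dim\mathfrak h_{(s;g)} = 24/\varphi(n)$ forces the square to again have a $24/\varphi(n)$-dimensional weight-one space, i.e.\ to be one of the $V_\Lambda(k)$ with $(n,k)=1$. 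Hence $V_\Lambda(1)\circ\sigma^2 = (V_\Lambda(1)\circ\sigma)\circ\sigma \cong V_\Lambda[\hat g^s](0)^{\boxtimes 2} \in \mathcal V$.

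Once $V_\Lambda(1)\circ\sigma^2 \in \mathcal V$, i.e.\ $V_\Lambda(1)\circ\sigma^2\cong V_\Lambda(k)$ with $(n,k)=1$, it follows by the criterion of \cite{Sh07} (used in Section~4.2) in reverse that $\sigma^2$ preserves the set $\{V_\Lambda(r)\circ\sigma^2\mid 0\le r\le n-1\}=\{V_\Lambda(r)\mid 0\le r\le n-1\}$, so $\sigma^2 \in \Stab_{\Aut(V_\Lambda^{\hat g})}(\mathcal V)$, and this stabilizer is identified with $N_{\aut(V_\Lambda)}(\langle\hat g\rangle)/\langle\hat g\rangle$ by the argument of the previous section (the same reasoning as in Proposition~\ref{primecases}, via Theorem~\ref{normalizer}): an automorphism fixing the untwisted-module family is induced from the normalizer of $\langle\hat g\rangle$ in $\aut(V_\Lambda)$.

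\medskip

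I expect the main obstacle to be the fusion computation in the second step — specifically, pinning down $V_\Lambda[\hat g^s](0)^{\boxtimes 2}$ precisely enough to see it is an \emph{untwisted} simple current of the right conformal weight, rather than merely that $\sigma^2$ has some controlled action. Here one has to be a little careful because $|g|$ is not prime, so the abelian-group structure on $\mathrm{Irr}(V_\Lambda^{\hat g})$ is a product of cyclic groups and one must isolate the cyclic factor through which the relevant simple currents run; the weight-one dimension count $\dim(-)_1 = 24/\varphi(n)$ is what rules out the unwanted summands of higher top weight, and invoking \cite[Theorem 5.3]{Lam19} (as in Lemma~\ref{p1}) to get the group structure of $\mathrm{Irr}(V_\Lambda^{\hat g})$ from the fact that the twisted conformal weights lie in $\frac1n\Z$ is the technical heart of the matter.
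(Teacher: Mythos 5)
Your argument has a genuine gap at its central step: from $V_\Lambda(1)\circ\sigma\cong V_\Lambda[\hat g^{s}](0)$ you conclude $V_\Lambda(1)\circ\sigma^{2}=(V_\Lambda(1)\circ\sigma)\circ\sigma\cong V_\Lambda[\hat g^{s}](0)^{\boxtimes 2}$. This conflates conjugation by $\sigma$ with fusion by $V_\Lambda(1)\circ\sigma$. What is true is $(V_\Lambda(1)\circ\sigma)\circ\sigma = V_\Lambda[\hat g^{s}](0)\circ\sigma$, and determining where $\sigma$ sends the twisted-type simple current $V_\Lambda[\hat g^{s}](0)$ is precisely the unknown the lemma is about; conjugation is a group automorphism of the simple-current group, not translation by the image of $V_\Lambda(1)$, so nothing forces $V_\Lambda[\hat g^{s}](0)\circ\sigma$ to be the fusion square. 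Moreover, the auxiliary claim you invoke --- that the fusion square of a twisted-type simple current returns to the untwisted sector --- is false in this setting: $V_\Lambda[\hat g^{s}](0)^{\boxtimes 2}$ lies in the $\hat g^{2s}$-twisted sector, and $2s\not\equiv 0 \pmod{|g|}$ for every relevant order $|g|\in\{6,10,14,15,21,26,39\}$ because $(s,|g|)=1$. In fact your two claims are mutually inconsistent: if the squaring identity held, then $V_\Lambda(1)\circ\sigma^{2}$ would be of twisted type and the lemma's conclusion would fail. The weight-one dimension count does not repair this, since the obstruction is which twisted sector the module lies in, not the size of its degree-one piece.

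The paper's proof sidesteps computing $V_\Lambda[\hat g^{s}](0)\circ\sigma$ directly and argues by contradiction: if $\sigma^{2}\notin \Stab_{\Aut(V_\Lambda^{\hat g})}(\mathcal V)$, then by the dichotomy $V_\Lambda(1)\circ\tau\in\mathcal V\cup\mathcal V_t$ one has $V_\Lambda(1)\circ\sigma^{2}\cong V_\Lambda[\hat g^{j}](0)$ for some $j$ prime to $|g|$. Applying $\sigma$-conjugation to the fusion relation $V_\Lambda[\hat g^{s}](1)\cong V_\Lambda(1)\boxtimes V_\Lambda[\hat g^{s}](0)$ yields
\[
V_\Lambda[\hat g^{s}](1)\circ\sigma\;\cong\; V_\Lambda[\hat g^{s}](0)\boxtimes V_\Lambda[\hat g^{j}](0)\;\cong\; V_\Lambda[\hat g^{s+j}](0),
\]
which has integral conformal weights, whereas $V_\Lambda[\hat g^{s}](1)$ does not; since conjugation preserves the $L(0)$-spectrum, this is a contradiction. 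To salvage your write-up you would need to replace the squaring identity with an argument of this kind (constraining $V_\Lambda[\hat g^{s}](0)\circ\sigma$ via fusion relations and conformal-weight integrality), rather than computing fusion powers of $V_\Lambda[\hat g^{s}](0)$.
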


 \begin{proof}
 Let $\sigma \in \Aut(V_{\Lambda}^{\hat{g}} )$ such that $V_\Lambda(1)\circ \sigma \cong 
 V_\Lambda[\hat{g}^i](0)$ for some $i$ with $(i, |g|)=1$.  
 
 Suppose $\sigma^2\notin  \Stab_{\Aut(V_{\Lambda}^{\hat{g}} )}(\mathcal{V})$. Then 
 we have $V_\Lambda(1)\circ \sigma^2 
\cong  V_\Lambda[\hat{g}^j](0)$ for some $j$ with $(j, |g|)=1$. In this case, we have 
\[
\begin{split}
V_\Lambda[\hat{g}^i](1) \circ \sigma   &= (V_\Lambda(1)\times V_\Lambda[\hat{g}^i](0) )\circ \sigma \\ 
& \cong  V_\Lambda[\hat{g}^i](0) \times V_\Lambda[\hat{g}^j](0) \cong   V_\Lambda[\hat{g}^{i+j}](0). 
\end{split}
\]
The irreducible module $V_\Lambda[\hat{g}^{i+j}](0)$ has integral weights but the weights of 
$V_\Lambda[\hat{g}^i](1)$ are not integers, which is a contradiction. Hence, we have  $\sigma^2\in 
Stab_{\Aut(V_{\Lambda}^{\hat{g}} )}(\mathcal{V})$.   	
 \end{proof}

\begin{corollary} \label{index:aut}
Let  $g\in O(\Lambda)$ such that $g\in -3A, -5A, -7A, -13A$, $15A$,  $21A$, or $39A$. Then 
$|\Aut(V_{\Lambda}^{\hat{g}} )/ \Stab_{\Aut(V_{\Lambda}^{\hat{g}} )}(\mathcal{V})|=2$. 
\end{corollary}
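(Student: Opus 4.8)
The plan is to combine the previous lemma with the standard relationship between $\Aut(V_L^{\hat g})$ and the normalizer $N_{\aut(V_L)}(\langle\hat g\rangle)$. First I would recall that for $g$ completely fixed point free on $\Lambda$ with $|g|$ not a prime, the analysis in Section~\ref{S:twistedtype} shows that every $\sigma\in\Aut(V_\Lambda^{\hat g})$ sends $V_\Lambda(1)$ under the conjugate action into $\mathcal V\cup\mathcal V_t$: if $\sigma$ does not preserve the set $\{V_\Lambda(r)\mid 0\le r\le|g|-1\}$, then $V_\Lambda(1)\circ\sigma$ must be a simple current module of $V_\Lambda^{\hat g}$ of twisted type, and by \eqref{phis} and the constraint $\varepsilon(s)\le 1$ together with $\dim(V_{\mu+\Lambda}^T[\hat g^s](j))_1=\dim\mathfrak h_{(1;g)}$, the only twisted simple currents that can occur are those with $\varepsilon(s)=1-1/|g^s|$ and $(s,|g|)=1$, i.e.\ exactly the modules in $\mathcal V_t$ (the modules $V_\Lambda[\hat g^i](0)$ with $(i,|g|)=1$, realized inside $V_\Lambda^T[\hat g^i]$). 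Thus $V_\Lambda(1)\circ\sigma\in\mathcal V$ precisely when $\sigma$ preserves $\{V_\Lambda(r)\}$, which by \cite[Theorem 2.1]{Sh07} and Theorem~\ref{normalizer} is equivalent to $\sigma\in\Stab_{\Aut(V_\Lambda^{\hat g})}(\mathcal V)\cong N_{\aut(V_\Lambda)}(\langle\hat g\rangle)/\langle\hat g\rangle$.

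Next I would argue that $\Stab_{\Aut(V_\Lambda^{\hat g})}(\mathcal V)$ has index at most $2$. The map $\sigma\mapsto(V_\Lambda(1)\circ\sigma\in\mathcal V\text{ or }\mathcal V_t)$ descends to a map from $\Aut(V_\Lambda^{\hat g})$ to the two-element set $\{\mathcal V,\mathcal V_t\}$; by the preceding Lemma, $\sigma^2$ always lands in $\Stab_{\Aut(V_\Lambda^{\hat g})}(\mathcal V)$, so this stabilizer contains the subgroup generated by all squares and hence has index dividing $2$ (equivalently, $\Aut(V_\Lambda^{\hat g})/\Stab$ is a quotient of $\Z/2\Z$). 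This gives $|\Aut(V_\Lambda^{\hat g})/\Stab_{\Aut(V_\Lambda^{\hat g})}(\mathcal V)|\in\{1,2\}$.

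Finally I would rule out index $1$, i.e.\ exhibit an extra automorphism $\sigma$ with $V_\Lambda(1)\circ\sigma\in\mathcal V_t$. For each listed class $-3A,-5A,-7A,-13A,15A,21A,39A$ we have $\varepsilon(s)=1-1/|g|$ for $(s,|g|)=1$, so $V_\Lambda^T[\hat g^s]$ (equivalently $V_\Lambda[\hat g^s](0)$ after taking the appropriate eigencomponent) is a simple current of $V_\Lambda^{\hat g}$; by Theorem~\ref{isoLeech} the orbifold $V_\Lambda^{orb(g)}$ is isomorphic to $V_\Lambda$ itself, so the irreducible $V_\Lambda^{\hat g}$-modules $V_\Lambda(1)$ and $V_\Lambda[\hat g](0)$ both appear inside $V_\Lambda\cong V_\Lambda^{orb(g)}$ as simple current modules of the same conformal weight and same $\dim(\cdot)_1$. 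Since $V_\Lambda$ has no extra structure distinguishing them (both generate the same group in $\irr(V_\Lambda^{\hat g})$ under fusion, which is the relevant abelian group acting simply transitively), the corresponding automorphism of $V_\Lambda$ coming from the simple-current/orbifold duality induces a $\sigma\in\Aut(V_\Lambda^{\hat g})$ with $V_\Lambda(1)\circ\sigma\cong V_\Lambda[\hat g](0)\in\mathcal V_t$; alternatively, this $\sigma$ is exactly the "lift" construction producing the extra automorphism already noted in the Corollary after Theorem~\ref{isoLeech}. Hence the index is exactly $2$.

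The main obstacle is the last step: showing the index is not $1$, i.e.\ genuinely producing $\sigma$ with $V_\Lambda(1)\circ\sigma$ of twisted type rather than merely showing such a $\sigma$ is not obstructed. I expect this to follow from the inverse orbifold $V_\Lambda=(V_\Lambda^{orb(g)})^{orb}$ viewpoint of Theorem~\ref{isoLeech} — the isomorphism $V_\Lambda^{orb(g)}\cong V_\Lambda$ means $V_\Lambda$ contains both an untwisted and a twisted simple current of $V_\Lambda^{\hat g}$ as graded pieces, and the automorphism of $V_\Lambda$ permuting these graded pieces restricts to the desired $\sigma$. Making this precise may require invoking the explicit reconstruction of the extra automorphism from \cite{LS21} in the prime case and noting it carries over verbatim to $g^s$ when $|g^s|$ is prime and $g$ is completely fixed point free, which is exactly the content of the Proposition preceding this section.
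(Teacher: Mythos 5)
Your overall route (reduce to the preceding Lemma plus the existence of an extra automorphism coming from Theorem \ref{isoLeech} and its Corollary) is the same as the paper's, but your middle step contains a genuine gap. You claim that since $\sigma^2\in\Stab_{\Aut(V_\Lambda^{\hat g})}(\mathcal V)$ for every $\sigma$, the stabilizer ``contains the subgroup generated by all squares and hence has index dividing $2$.'' That implication is false as a piece of group theory: a subgroup containing every square contains $G^2$, but $G/G^2$ is only an elementary abelian $2$-group, so the index can be any power of $2$ (e.g.\ the trivial subgroup of $\Z/2\times\Z/2$ contains all squares and has index $4$). Likewise, the ``map to the two-element set $\{\mathcal V,\mathcal V_t\}$'' you introduce is a priori only a map of sets; without knowing it is compatible with the group law it gives no bound on the index. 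What is actually needed is that the complement of $\Stab_{\Aut(V_\Lambda^{\hat g})}(\mathcal V)$ is a single coset, equivalently that every $\sigma$ with $V_\Lambda(1)\circ\sigma\in\mathcal V_t$ \emph{interchanges} the two sets $\mathcal V$ and $\mathcal V_t$.

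This is exactly what the Lemma's own technique gives, and it is the missing ingredient: if $V_\Lambda(1)\circ\sigma\cong V_\Lambda[\hat g^i](0)$, then the fusion rules $V_\Lambda(k)\cong V_\Lambda(1)^{\boxtimes k}$ and $V_\Lambda[\hat g^i](0)^{\boxtimes k}\cong V_\Lambda[\hat g^{ik}](0)$, together with the fact that conjugation preserves fusion products and conformal weights, show $\mathcal V\circ\sigma=\mathcal V_t$; and since $V_\Lambda(1)\circ\sigma^2=(V_\Lambda(1)\circ\sigma)\circ\sigma\in\mathcal V$ by the Lemma, the same fusion-power argument gives $\mathcal V_t\circ\sigma=\mathcal V$. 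Hence $\Aut(V_\Lambda^{\hat g})$ acts on the two-element set $\{\mathcal V,\mathcal V_t\}$, i.e.\ one has a homomorphism to $\mathrm{Sym}(\{\mathcal V,\mathcal V_t\})\cong\Z/2\Z$ whose kernel is $\Stab_{\Aut(V_\Lambda^{\hat g})}(\mathcal V)$, so the index divides $2$ by orbit--stabilizer. For the index being exactly $2$, your last step can be shortened to the paper's intended argument: the Corollary following Theorem \ref{isoLeech} provides an extra automorphism $\sigma$, and since $\Lambda$ is unimodular there are no untwisted-type simple currents outside $\{V_\Lambda(r)\}$ (Case (I) is vacuous), so by \cite[Theorem 2.1]{Sh07} and the weight and $\dim(\cdot)_1$ constraints $V_\Lambda(1)\circ\sigma$ must lie in $\mathcal V_t$; your more elaborate ``simple-current/orbifold duality'' discussion is not needed and, as you yourself note, is not fully justified as written.
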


The automorphism groups of  $\Aut(V_{\Lambda}^{\hat{g}} )$ for $g\in -3A, -5A, -7A, -13A$, $15A$,  $21A$, or $39A$ are as follows. We refer to \cite{Atlas} and \cite{Cur80} for the shapes of $C_{O(\Lambda)}(g)$ and $N_{O(\Lambda)}(\langle g\rangle)$.

\begin{longtable}{|c|c|c|c|}
	\caption{Automorphism groups of  $\Aut(V_{\Lambda}^{\hat{g}} )$}\label{Table:nonprime}
	\\ \hline 
	$g\in O(\Lambda)$ & $C_{Co_0}(g)$ & $N_{Co_0}(\langle g\rangle)$ & $\Aut(V_\Lambda^{\hat{g}})$	 \\ \hline    
	$-3A$ & $6.\mathrm{Suz}$ &  $6.\mathrm{Suz}{:}2$ 
	& $\mathrm{Suz}{:}2 \times 2$  \\ 
	$-5A$ & $2.(5\times J_2) $  & $2.(5\times J_2){:}2$ & $ J_2{:}2 \times 2$ \\
	$-7A$ & $2.(7 \times Alt_7)$ &  $2.(7{:}3 \times Alt_7){:}2$ & $(3\times Alt_7){:}2 . 2$ \\ 
	$-13A$& $2.(13 \times Alt_4)$ & $2.(13{:}6 \times Alt_4):2$ & $(6 \times Alt_4):2. 2$ \\
	$15A$ & $2.(5\times 3.Alt_6)$& $2.(5\times 3.Alt_6.2).2$ &  $ 2. Alt_6.2^2 \times 2$\\
	$21A$  & $2.(Alt_4 \times 21)$ & $2.( (Alt_4\times 3){:} 2 \times 7{:} 3): 2$   &  $2.( Alt_4{:} 2 \times 3): 2.2$    \\ 
	   $39A$& $2.39$ & $2.(3\times 13:6):2$  & $2.6.2.2$\\ \hline 
\end{longtable} 

\end{document}